\theoremstyle{plain}
\def \ra{\rightarrow}
\def \mfk{\mathfrak}
\def \mcO{\mathcal{O}}
\def \mcOP{\mcO_{\mbb{P}^2}}
\def \mcD{\mathcal{D}}
\def \mcC{\mathcal{C}}
\def \mbb{\mathbb}
\def \xhra{\xhookrightarrow}
\newcommand{\trm}{\textrm}
\newcommand{\hra}{\hookrightarrow}
\newcommand{\xra}{\xrightarrow}
\title{Ulrich line bundles on double planes}
\author[A.~J.~Parameswaran]{A.~J.~Parameswaran}
\address{School of Mathematics, Tata Institute of Fundamental Research, Mumbai - 400005.}
\email{param@math.tifr.res.in}
\author[P.~Narayanan]{Poornapushkala Narayanan}
\address{School of Mathematics, Tata Institute of Fundamental Research, Mumbai - 400005 \\ Current address: International Centre for Theoretical Sciences, Bengaluru - 560089.}
\email{poorna.narayanan@icts.res.in}
\thanks{Mathematics Classification numbers: 14E20, 14J60, 14H50, 14C20}
\keywords{Ulrich bundles, double planes, cyclic coverings.}
\begin{document}

\begin{abstract}
  Consider a smooth complex surface $X$ which is a double cover of the
  projective plane $\mbb{P}^2$ branched along a smooth curve of degree
  $2s$. In this article, we study the geometric conditions which are
  equivalent to the existence of Ulrich line bundles on $X$ with
  respect to this double covering. Also, for every $s\geq 1$, we
  describe the classes of such surfaces which admit Ulrich line
  bundles and give examples.
\end{abstract}

\maketitle

\section{Introduction}\label{introduction}
Suppose that $X\xhra{i} \mbb{P}^N$ is a smooth projective variety over
the field of complex numbers. A vector bundle $E$ on $X$ is said to be
\emph{Ulrich} 
if for any finite linear projection
${\pi:X\ra \mbb{P}^{\text{dim}\,X}}$ obtained 
from $i$, the direct image $\pi_*E$ is the trivial vector bundle on
the projective space, cf.  $\mathcal{x}$ \ref{prelim-ub} for
equivalent definitions.

The study of Ulrich bundles has been an active area of 
research, especially since the paper of Eisenbud and Schreyer
\cite{ES}. One of the primary questions in this area is that of
the existence of Ulrich bundles on smooth projective varieties.
In particular, one would also like to know the minimal rank 
of such a bundle whenever it exists. The existence of Ulrich bundles
has been established in several cases e.g.~when $X$ is a curve \cite{ES},
a smooth complete 
intersection variety \cite{HUB}, an abelian surface \cite{AB1},
a general K3 surface \cite{AFO}, a grassmannian variety \cite{CM},
a Fano threefold of index two \cite{AB}, a ruled surface \cite{ACM}
etc. 

In this article, we investigate the existence of Ulrich bundles on
\emph{double planes}. A double plane, in our context is a smooth
projective surface $X$ which is a double cover of the projective plane
$\mbb{P}^2$. Such a surface has to necessarily be branched over a smooth
even degree curve in $\mbb{P}^2$. Refer $\mathcal{x}\,$\ref{hsurfaces}
for some well-known details about double planes.

First, we identify the necessary and sufficient conditions for a
double plane $\pi:X\ra\mbb{P}^2$ to carry an Ulrich \emph{line
  bundle} with respect to $\pi$ (i.e. a line bundle $L$ on $X$ such
that $\pi_*L=\mcOP\oplus\mcOP$) in the following
\newtheorem{thm}{Theorem}[section]
\begin{thm}\label{thm:neccsuff}
Suppose that $\pi:X\ra \mbb{P}^2$ is a double plane 
branched over a smooth curve $B$ of degree $2s$. Let $R$ denote the smooth
ramified curve in $X$ and $\sigma$ 
denote the involution of $X$ obtained by interchanging the two sheets
of the double cover. Then, the following are equivalent.
\begin{enumerate}
 \item The surface $X$ admits an Ulrich line bundle with respect to $\pi$.
 \item There is a smooth curve $D\subset X$ 
 such that $D\neq \sigma(D)$ and $D\cdot \sigma(D)=D\cdot R=\sigma(D)\cdot R =s^2$.
 \item There is a smooth curve $C$ of degree $s$ 
in $\mbb{P}^2$ which is a tangent to $B$ of even 
order at every point of $C\cap B$.
\end{enumerate}
 \end{thm}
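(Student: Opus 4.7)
My plan is to prove the three equivalences cyclically, say $(3)\Rightarrow(2)$, $(2)\Rightarrow(1)$, and $(1)\Rightarrow(3)$. Throughout I would use the standard double-cover identities $\pi_*\mcO_X = \mcOP \oplus \mcOP(-s)$, $K_X = \pi^*\mcOP(s-3)$, $\chi(\mcO_X) = 1 + (s-1)(s-2)/2$, and, for every line bundle $L$ on $X$, the norm identity $L\otimes\sigma^*L \cong \pi^*\mathrm{Nm}(L)$ together with the determinant formula $\det(\pi_*L) = \mathrm{Nm}(L)\otimes\mcOP(-s)$.

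For $(3)\Rightarrow(2)$, I would argue that even-order tangency of $C$ with $B$ makes the branch equation restrict to a square $f^2 \in H^0(C,\mcO_C(sH|_C))$, so the restricted double cover splits as $\pi^{-1}(C) = D+\sigma(D)$ with $D\cong C$ smooth. Then $(D+\sigma(D))^2 = (\pi^*C)^2 = 2s^2$, and adjunction on $D$ (with $g(D) = (s-1)(s-2)/2$ and $K_X\cdot D = s(s-3)$) gives $D^2 = 0$, whence $D\cdot\sigma(D) = s^2$. Conversely, for $(1)\Rightarrow(3)$, the condition $\pi_*L = \mcOP^2$ forces $\chi(L)=2$, $h^0(L)=2$, $h^{>0}(L)=0$, and $\mathrm{Nm}(L) = \mcOP(s)$, so via Riemann--Roch $L\cdot\pi^*H = s$, $L^2 = 0$, and $L\cdot\sigma^*L = s^2$. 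Since $L^2 = 0$ and $h^0(L) = 2$, the pencil $|L|$ is base-point-free and defines $\phi_L:X\to\mbP^1$; I would take a general (smooth, by Bertini) fiber $D$, noting $D\neq\sigma(D)$ since otherwise $\phi_L$ would factor through $\pi$ -- impossible as $\mbP^2$ admits no nonconstant morphism to $\mbP^1$. Setting $C:=\pi(D)$, the relations $D\cdot\pi^*H = s$ and $D^2 = 0$ force $\deg C = s$ and $\pi|_D$ to be an isomorphism, so $C$ is smooth of degree $s$; the splitting $\pi^{-1}(C) = D+\sigma(D)$ then exhibits $b|_C$ as a square in $H^0(\mcO_C(sH|_C))$, giving (3).

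For $(2)\Rightarrow(1)$, I would set $L := \mcO_X(D)$ and write $\pi^*C = D + \sigma(D)$ with $C := \pi(D)$ of degree $d$. One computes $D^2 = d^2 - s^2$; combining adjunction on $D$ with the genus inequality $g(D) \leq p_a(C) = (d-1)(d-2)/2$ and the geometric analysis of how singularities of $C$ would contribute to $D\cdot\sigma(D)$ forces $d = s$ and $C$ smooth. Riemann--Roch then yields $\chi(L) = 2$, and $\pi_*L\cong\mcOP^2$ follows by verifying the Ulrich cohomological vanishings $H^i(L\otimes\pi^*\mcOP(-j)) = 0$ for all $i$ and $j=1,2$, which via Serre duality reduce to the ineffectivity of $D - j\pi^*H$ and $(j+s-3)\pi^*H - D$, both checked using $D\cdot\pi^*H = s$ and the irreducibility of $D$.

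The main obstacle I expect is in $(2)\Rightarrow(1)$: rigorously showing $d = s$ and that $C = \pi(D)$ is smooth, ruling out the a priori possibility that $D$ descends birationally to a singular plane curve of lower degree. A secondary delicate point arises in $(3)\Rightarrow(2)$: for $s \geq 3$ the group $\mathrm{Pic}(C)$ may have nontrivial $2$-torsion, so even tangency alone yields $B\cap C = 2E$ as a divisor but not automatically $\mcO_C(E)\cong\mcO_C(sH|_C)$; one must use the double-cover structure to pick out the correct square root so that $\pi^{-1}(C)$ really does split.
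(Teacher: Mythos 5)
The implications $(1)\Leftrightarrow(2)$ in your proposal are essentially sound, though your route for $(2)\Rightarrow(1)$ differs from the paper's: you verify the cohomological Ulrich conditions $H^i(L\otimes\pi^*\mcOP(-j))=0$ for $j=1,2$, whereas the paper shows $\mcO_X(D)|_D=\mcO_D$ by comparing adjunction on $D\subset X$ with adjunction on $C\subset\mbb{P}^2$, deduces $h^0(\pi_*\mcO_X(D))=2$ and $c_1(\pi_*\mcO_X(D))=0$, and concludes $\pi_*\mcO_X(D)\simeq\mcOP\oplus\mcOP$ because a morphism of locally free sheaves on $\mbb{P}^2$ that is an isomorphism outside a codimension-two set is an isomorphism. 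Both routes are workable, and the ``main obstacle'' you identify there (that $\deg C=s$ and $C\simeq D$) is dispatched in the paper by the identity $2(C\cdot B)=(D+\sigma(D))\cdot 2R=4s^2$ together with Bezout, plus the observation that $\pi|_{D+\sigma(D)}$ is an honest two-sheeted cover of $C$ away from $\mathrm{Supp}\,\mfk{d}$.

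The genuine gap is in $(3)\Rightarrow(2)$, and it is precisely the issue you flag in your final sentence without resolving it. Even-order tangency gives $B\cap C=2\mfk{d}$ as divisors, but the restricted cover $\pi^{-1}(C)\to C$ splits if and only if the $2$-torsion class $L_0:=\mcO_C(\mfk{d})\otimes\mcO_C(-s)$ is trivial, equivalently if and only if $F|_C$ is the square of a section of $\mcO_C(s)$. For $s\geq 3$ the curve $C$ has genus $\geq 1$, and there is no ``correct square root to pick'': the divisor $\mfk{d}$ is determined by $B\cap C$, and either its class equals $\mcO_C(s)$ or it does not. Closing this gap is the entire content of Section 4 of the paper. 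There one shows that the composite $\pi_1(C\setminus C\cap B)\to\pi_1(\mbb{P}^2\setminus B)\to\mbb{Z}/2\mbb{Z}$ is trivial; the key input is Lemma \ref{genus loops map trivially}, asserting that the image of $\pi_1(C\setminus C\cap B)$ in $\pi_1(\mbb{P}^2\setminus B)\simeq\mbb{Z}/2s\mbb{Z}$ is generated by the meridians of the punctures alone (so the genus classes of $C$ contribute nothing), and that lemma is proved by degenerating $C$ inside the locus of curves transverse to $B$ to a union of $s$ concurrent lines and applying Van Kampen together with the Milnor fibration. Since each meridian maps to $(C\cdot B)_x$ times the generator and these multiplicities are even by hypothesis, the lift exists and $\pi^{-1}(C)$ is reducible. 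Without an argument of this kind, or an algebraic substitute proving $L_0=\mcO_C$, your proof of $(3)\Rightarrow(2)$ --- and hence of the theorem --- does not go through.
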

 We prove that conditions (1) and (2) are equivalent and they
 imply condition (3) in $\mathcal{x}\,$\ref{conditions}. We then show that
 (3) implies (2) in $\mathcal{x}\,$\ref{tangent}. This proof
involves topology and certain deformation theory
arguments.

In $\mathcal{x}\,$\ref{construct}, we describe classes of smooth
double planes branched over curves of degree $2s$ for every $s\geq 1$
which admit Ulrich line bundles, and give examples. When $X$ is
branched over a conic $B$, then Ulrich line bundles arise from lines
in $\mbb{P}^2$ which are simple tangents to $B$ at any point of $B$,
as shown in
\newtheorem{corr}[thm]{Corollary}
\begin{corr}\label{conicthm}
  Let $X$ be a smooth surface which is a double cover of $\mbb{P}^2$
  branched along a smooth conic curve $B$. Then $X$ admits a pair of
  Ulrich line bundles.
\end{corr}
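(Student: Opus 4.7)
The plan is to invoke Theorem~\ref{thm:neccsuff} with $s=1$, which reduces the problem to producing a smooth degree $1$ curve $C\subset\mbb{P}^2$ (that is, a line) which meets $B$ with even multiplicity at every point of $C\cap B$. By B\'{e}zout's theorem $C\cdot B=2$, so the condition of even multiplicity at each intersection point forces $C\cap B$ to consist of a single point $p$ at which $C$ meets $B$ with multiplicity $2$; equivalently, $C$ is the tangent line to $B$ at some $p\in B$. Since the smooth conic $B$ admits a tangent line at every one of its points, condition~(3) of Theorem~\ref{thm:neccsuff} is satisfied. Applying the theorem, I obtain a smooth curve $D\subset X$ with $D\ne\sigma(D)$ and $D\cdot\sigma(D)=s^2=1$, and $L:=\mcO_X(D)$ is an Ulrich line bundle on $X$ with respect to $\pi$.

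To obtain a second Ulrich line bundle I would simply apply the covering involution. Since $\pi\circ\sigma=\pi$ and $\sigma^2=\mathrm{id}$, one has $\pi_*(\sigma^*L)=\pi_*L=\mcO_{\mbb{P}^2}\oplus\mcO_{\mbb{P}^2}$, so $\sigma^*L=\mcO_X(\sigma(D))$ is also Ulrich with respect to $\pi$. To verify that $L$ and $\sigma^*L$ are non-isomorphic I would compute intersection numbers: from $D+\sigma(D)=\pi^*H$, where $H$ is the hyperplane class of $\mbb{P}^2$, one gets $(D+\sigma(D))^2=2H^2=2$, and combined with $D\cdot\sigma(D)=1$ and $\sigma(D)^2=D^2$ this forces $D^2=0$. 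Consequently $(D-\sigma(D))^2=-2\ne 0$, so $\mcO_X(D-\sigma(D))$ is not even numerically trivial, let alone trivial. Hence $L$ and $\sigma^*L$ are two genuinely distinct Ulrich line bundles on $X$.

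The main step is the production of the line $C$, and this is essentially automatic once Theorem~\ref{thm:neccsuff} is available, because any tangent line to the smooth conic works. The only real point requiring verification is the concluding intersection-theoretic check that $L$ and $\sigma^*L$ are non-isomorphic, and no serious obstacle arises in the $s=1$ case.
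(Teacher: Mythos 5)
Your proposal is correct and follows essentially the same route as the paper: both produce the required degree $1$ curve as a tangent line to the smooth conic $B$ and invoke condition (3) of Theorem~\ref{thm:neccsuff}, with the second Ulrich bundle coming from the involution $\sigma$. The only cosmetic difference is in verifying that the pair is genuinely distinct: you compute $(D-\sigma(D))^2=-2$, while the paper identifies $X\simeq\mbb{P}^1\times\mbb{P}^1$ and matches $\mcO_X(D)$, $\mcO_X(\sigma(D))$ with the two rulings (and its remark following Theorem~\ref{thm:neccsuff} already notes $\mcO_X(D)\neq\mcO_X(\sigma(D))$ since $D\cdot\sigma(D)=s^2\neq 0=D^2$); both checks are valid.
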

In fact, any double plane $X$ branched over a smooth conic is
isomorphic to $\mbb{P}^1\times\mbb{P}^1$, and the two Ulrich line
bundles correspond to the two rulings of $\mbb{P}^1\times\mbb{P}^1$,
cf. $\mathcal{x}\,$\ref{conic}.  Next, we consider double planes $X$
branched over a smooth quartic in $\mathcal{x}\,$\ref{branch quartic}
where we prove the following
\begin{corr}\label{quartic}
  Let $X$ be a smooth surface which is a double cover of $\mbb{P}^2$
  branched along a smooth quartic curve $B$. Then $X$ admits 63 pairs
  of Ulrich line bundles.
\end{corr}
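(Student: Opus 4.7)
The plan is to apply Theorem~\ref{thm:neccsuff} with $s=2$ and translate the count into a problem in the Jacobian $J(B)$ of the branch quartic. Since $B$ is a smooth plane quartic, $g(B)=3$ and $K_B=\mcO_B(1)$. By the equivalence (1)$\Leftrightarrow$(3), pairs $(L,\sigma^*L)$ of Ulrich line bundles correspond to smooth conics $C\subset\mbb{P}^2$ tangent to $B$ of even order at every point of $C\cap B$. Bezout gives $C\cdot B=8$, and the even-tangency condition forces $C|_B=2D$ for an effective degree-$4$ divisor $D$ on $B$. Since $2D\sim C|_B\sim 2K_B$ in $\mathrm{Pic}(B)$, the class $\eta:=[D]-K_B$ lies in $J(B)[2]$, giving a natural map $\Phi$ from Ulrich pairs to $J(B)[2]$.

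The main step is to prove that $\Phi$ is a bijection onto $J(B)[2]\setminus\{0\}$. The trivial class is excluded because $[D]=K_B$ forces $D=L_0\cap B$ for a line $L_0$, so $C=2L_0$ fails to be smooth. For surjectivity onto non-trivial $\eta$, the key input is the cohomological identification $h^0(\mbb{P}^2,\mcO(2))=h^0(B,\mcO_B(2))=6$ (the latter by Riemann--Roch on $B$): the restriction map is injective since $\deg B>2$, hence an isomorphism, so every effective divisor of class $2K_B$ on $B$ is cut out by a unique plane conic. For non-trivial $\eta$, Riemann--Roch gives $h^0(K_B\otimes\eta)=2$, so $|K_B\otimes\eta|$ is a pencil; sending each $D$ in the pencil to the conic cutting out $2D$ produces a pencil of contact conics in $\mbb{P}^5$, and a general member is smooth because a line in $\mbb{P}^5$ meets the cubic discriminant of singular conics only finitely often.

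Injectivity of $\Phi$ follows from $X$ being a del Pezzo surface of degree $2$, hence rational with $\mathrm{Pic}^0(X)=0$: two contact conics $C_1,C_2$ yielding the same $\eta$ lie in the common pencil $|K_B\otimes\eta|$, so their lifts $\tildeC_i\subset X$ move in a connected algebraic family, and discreteness of $\mathrm{Pic}(X)$ forces $\mcO_X(\tildeC_1)=\mcO_X(\tildeC_2)$ (or its $\sigma^*$-conjugate), giving the same Ulrich pair. Counting the non-trivial two-torsion on a genus-$3$ Jacobian then gives $|J(B)[2]\setminus\{0\}|=2^{2g}-1=63$, as claimed.

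The principal obstacle is the surjectivity step: producing, for each non-trivial two-torsion class, an actual \emph{smooth} contact conic. The dimension match $h^0(\mcO_B(2))=6=h^0(\mbb{P}^2,\mcO(2))$ is what makes the existence argument clean, after which smoothness of a generic member reduces to a transversality check between a line and a cubic hypersurface in $\mbb{P}^5$.
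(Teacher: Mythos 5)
Your route is genuinely different from the paper's. The paper simply quotes Hadan's theorem that the tangent conics to a smooth plane quartic form $63$ disjoint one-parameter families, and then feeds each family into Theorem~\ref{thm:neccsuff} via Theorem~\ref{global reducibility}; you instead reprove Hadan's count by parametrizing contact conics by the non-trivial $2$-torsion of $J(B)$. Where your reduction is spelled out it is sound: the restriction isomorphism $H^0(\mbb{P}^2,\mcOP(2))\simeq H^0(B,\mcO_B(2))$, the exclusion of $\eta=0$ (which forces $C=2L_0$, a double line), the computation $h^0(K_B\otimes\eta)=2$ for $\eta\neq 0$, and the use of discreteness of $\mathrm{Pic}(X)$ for injectivity are all correct. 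What this buys is a self-contained argument that also explains why the $63$ pairs are pairwise distinct, a point the paper delegates entirely to the citation. One imprecision: the family of conics cutting out $\{2D : D\in|K_B\otimes\eta|\}$ is the image of $\mbb{P}^1$ under $s\mapsto s^2$, hence a conic, not a line, in $\mbb{P}^5$; this does not affect the logic. You should also record the one-line check that $\Phi$ is well defined on Ulrich pairs: two conics inducing the same bundle $\mcO_X(D)$ cut out divisors on $B\simeq R$ in the same class $\mcO_X(D)|_R$, hence the same $\eta$.

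The genuine gap is in the surjectivity step, exactly where you invoke smoothness of a general member. The claim that ``a line in $\mbb{P}^5$ meets the cubic discriminant only finitely often'' fails when the curve is contained in the discriminant, and the discriminant cubic contains many lines and conics (for instance any pencil of line-pairs with a fixed component), so you must rule out that your one-parameter family lies entirely inside it. This needs an argument, though a short one: a singular member of the family is either a double line $2L$, which gives $[D]=[L\cap B]=K_B$ and hence $\eta=0$, excluded; or a pair of distinct lines $L_1+L_2$, in which case, since $L_1\cap L_2$ is a single point and $\deg(L_i|_B)=4$ is even, the parity condition $L_1|_B+L_2|_B=2D$ forces each $L_i|_B$ to be everywhere even, i.e.\ each $L_i$ is one of the finitely many ($28$) bitangents of $B$. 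Hence only finitely many members of the one-dimensional family are singular, and a general member is smooth. With that non-containment supplied, your proof closes.
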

Hadan \cite{IH} proves that - to any smooth plane quartic curve, there
are 63 disjoint one parameter families of smooth plane conics (simple)
tangentially meeting the quartic at 4 points. Using this, we obtain
that a general conic in each such family gives rise to a pair of
Ulrich line bundles.  When $X$ is branched over a smooth degree $2s$
curve for $s\geq 3$, we prove the theorem stated below.
\begin{thm}\label{sexticthm}
 Let $X$ be a smooth surface which is a double cover of $\mbb{P}^2$ branched along
 a generic smooth curve $B\subset\mbb{P}^2$ of degree $2s$ for $s\geq 3$. 
 Then $X$ does not admit Ulrich line bundles.
\end{thm}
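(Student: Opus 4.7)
By the equivalence (1)$\Leftrightarrow$(3) of Theorem~\ref{thm:neccsuff}, it is enough to show that for a generic smooth curve $B\subset\mbb{P}^2$ of degree $2s$ there is no smooth plane curve $C$ of degree $s$ meeting $B$ in a divisor of the form $2D$ on $C$ (i.e.\ $C$ is tangent to $B$ with even order at every intersection point). I would establish this by introducing the incidence locus
\[
 Z\;=\;\{(C,B)\in V_s\times V_{2s}\colon\, C,\,B\text{ smooth},\;C\cdot B=2D_C\text{ for some }D_C\in\mathrm{Div}(C)\},
\]
where $V_d\subset \mbb{P}(H^0(\mbb{P}^2,\mcO(d)))$ denotes the open locus of smooth degree-$d$ plane curves, and bound $\dim Z<\dim V_{2s}$ so that the second projection fails to dominate. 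The main obstacle is that this estimate is sharp precisely at $s=3$, which must then be treated separately.

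\textbf{Dimension count for $s\geq 4$.} Fix a smooth $C\in V_s$ of genus $g=\binom{s-1}{2}$. By B\'ezout $C\cdot B$ has degree $2s^2$, so $D_C$ has degree $s^2$ and $\mcO_C(D_C)=\mcO_C(s)\otimes\eta$ for some $\eta$ in the finite group $\mathrm{Pic}^0(C)[2]$. Since $\deg(\mcO_C(s)\otimes\eta)=s^2>2g-2$, Riemann--Roch gives $\dim|\mcO_C(s)\otimes\eta|=s^2-g$ for every such $\eta$, so the image of the finite union of doubling maps $\bigsqcup_\eta |\mcO_C(s)\otimes\eta|\to|\mcO_C(2s)|$ has dimension $s^2-g$. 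Pulling back along the restriction $V_{2s}\to|\mcO_C(2s)|$, which is surjective (because $H^1(\mcOP(s))=0$) with fibres of dimension $3s+g$, shows the fibre of $Z\to V_s$ over $C$ has dimension at most $s^2+3s$. Therefore
\[
 \dim Z\;\leq\;\dim V_s+(s^2+3s)\;=\;\tfrac{s(s+3)}{2}+s^2+3s\;=\;\tfrac{3s(s+3)}{2},
\]
whereas $\dim V_{2s}=s(2s+3)$; the difference $\tfrac{s(s-3)}{2}$ is strictly positive when $s\geq 4$, so a generic $B$ lies outside the image of the second projection of $Z$.

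\textbf{The case $s=3$.} Here the above estimate only gives $\dim Z\leq\dim V_6$, and the dimension count alone is inconclusive. Note that $X$ is a K3 surface, since $K_X=\pi^*\mcOP(s-3)=\mcO_X$. I would invoke the Noether--Lefschetz theorem for the family of sextic double planes: for $B$ very general in $V_6$, $\mathrm{Pic}(X)=\mbb{Z}\cdot H$ with $H=\pi^*\mcOP(1)$ satisfying $H^2=2$ and $\sigma^*H=H$. If such an $X$ carried an Ulrich line bundle, Theorem~\ref{thm:neccsuff}(2) would produce a smooth curve $D\subset X$ with $D\neq \sigma(D)$ and $D\cdot\sigma(D)=9$; but writing $[D]=nH$ forces $\sigma^*[D]=nH$, and therefore $D\cdot\sigma(D)=2n^2$, contradicting $2n^2=9$ over the integers. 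This rules out Ulrich line bundles on the generic sextic double plane and completes the proof.
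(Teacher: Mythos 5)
Your proof is correct, but it takes a genuinely different route from the paper's, which disposes of all $s\geq 3$ in two lines: by Lemma~\ref{L not ample} an Ulrich line bundle forces $\rho(X)>1$, while the Noether--Lefschetz theorem for double planes (Buium; Friedman) gives $\text{Pic}\,X\simeq\mbb{Z}$ for generic $B$ of degree $2s$, $s\geq 3$. You instead invoke only the implication (1)$\Rightarrow$(3) of Theorem~\ref{thm:neccsuff} and, for $s\geq 4$, replace the Noether--Lefschetz input by an elementary incidence count; I have checked your numbers ($\dim|\mcO_C(s)\otimes\eta|=s^2-g$ because $s^2>2g-2$; the fibres of $V_{2s}\to|\mcO_C(2s)|$ have dimension $h^0(\mcOP(s))=3s+g$; hence $\dim Z\leq\tfrac{3s(s+3)}{2}$ against $\dim V_{2s}=s(2s+3)$, with gap $\tfrac{s(s-3)}{2}$) and they are right. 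What this buys you: for $s\geq 4$ the locus of branch curves admitting an everywhere even-order tangent degree-$s$ curve is a constructible set of positive codimension, so the non-existence holds on a Zariski-open dense set of $|\mcOP(2s)|$, whereas the Noether--Lefschetz argument only excludes a countable union of proper closed subsets (so ``generic'' should strictly be read as ``very general'' in the paper's proof, and in your $s=3$ case); your argument is also more self-contained, using only Riemann--Roch, B\'ezout and Chevalley, and it only relies on the easy direction (1)$\Rightarrow$(2)$\Rightarrow$(3) of Theorem~\ref{thm:neccsuff}, not on the topological converse. The price is that the estimate degenerates exactly at $s=3$, where you must fall back on Noether--Lefschetz for degree-two K3 surfaces, which is precisely the paper's input; your parity contradiction $D\cdot\sigma(D)=2n^2\neq 9$ is an acceptable substitute for the paper's appeal to Lemma~\ref{L not ample}. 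Two small points to tidy up: state explicitly that $Z$ is constructible (stratify by the finitely many torsion classes $\eta\in\text{Pic}^0(C)[2]$, which form a finite \'etale cover of $V_s$) so that the image of the second projection is constructible of dimension $\leq\dim Z$, and record that $\sigma^*H=H$ because $H=\pi^*\mcOP(1)$, which is what makes $[\sigma(D)]=[D]$ in $\text{Pic}\,X\simeq\mbb{Z}H$.
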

We show that a necessary condition for any double plane $X$ to carry
an Ulrich line bundle is that the Picard number $\rho(X)>1$, cf. Lemma
\ref{L not ample}.  By a version of the Noether-Lefschetz theorem, it
is known that the Picard group of a double plane branched over a
generic smooth degree $2s$ curve for $s\geq 3$ is $\mathbb{Z}$
(\cite{Bu}, \cite[Chapter 2]{RF}), which proves the theorem,
cf.~$\mathcal{x}\,$\ref{s>3discussion}.

However, for each $s\geq 3$, there are special classes of double
planes which do admit Ulrich line bundles. In particular we prove the
following in $\mathcal{x}\,$\ref{s>3discussion}.
\begin{thm}\label{s>2}
Let $C\subset \mbb{P}^2$ be a smooth degree $s$ curve. Consider an 
effective divisor $\mfk{d}$ of degree $s^2$ on
$C$ corresponding to a section of $H^0(C,\mcO_C(s)\otimes L_0)$, where 
$L_0\in\emph{Pic}\,C$ is of order two.
\begin{enumerate}
 \item[(a)] Then, there exist smooth curves $B$ of degree $2s$ such that $B\cdot C=2\mfk{d}$.
 \item[(b)] Further, let $X$ be a double cover of $\mbb{P}^2$ branched
   along a smooth curve $B$ as in part (a) of this theorem. Then, $X$
   admits Ulrich line bundles.
\end{enumerate}
\end{thm}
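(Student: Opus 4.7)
The plan is to reduce part (b) to condition (3) of Theorem \ref{thm:neccsuff}, after constructing in part (a) a smooth curve $B$ of degree $2s$ whose set-theoretic contact with $C$ is tangential of even order at every intersection point.

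For part (a), let $t\in H^0(C,\mcO_C(s)\otimes L_0)$ be the section whose zero divisor is $\mfk{d}$. Because $L_0^{\otimes 2}\cong\mcO_C$, the square $t^2$ lies in $H^0(C,\mcO_C(2s))$ and has divisor $2\mfk{d}$. From the short exact sequence
\[
0\ra \mcOP(s)\xra{\cdot f_C}\mcOP(2s)\ra \mcO_C(2s)\ra 0
\]
together with the vanishing $H^1(\mbP^2,\mcOP(s))=0$, the restriction $H^0(\mbP^2,\mcOP(2s))\ra H^0(C,\mcO_C(2s))$ is surjective, so $t^2$ lifts to some $F\in H^0(\mbP^2,\mcOP(2s))$ whose zero locus meets $C$ precisely along $2\mfk{d}$.

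Next I will study the linear system $\Lambda=\{F+f_CG : G\in H^0(\mbP^2,\mcOP(s))\}$; by construction every member cuts the divisor $2\mfk{d}$ on $C$. Its base locus is exactly $\mathrm{supp}(\mfk{d})$, a finite set, so away from it Bertini's theorem yields smoothness of a generic member. At a base point $p$, I will use the identity $\nabla(F+f_CG)(p)=\nabla F(p)+G(p)\,\nabla f_C(p)$, which is valid since $f_C(p)=0$. Smoothness of $C$ gives $\nabla f_C(p)\neq 0$, so the gradient is nonzero provided $G(p)$ avoids at most one forbidden scalar at each of the finitely many base points. These are finitely many Zariski-open conditions on $G\in H^0(\mbP^2,\mcOP(s))$, and a generic $G$ satisfies all of them simultaneously. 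This produces a smooth curve $B=\{F+f_CG=0\}$ of degree $2s$ with $B\cdot C=2\mfk{d}$.

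Part (b) will then follow at once: set-theoretically $B\cap C=\mathrm{supp}(\mfk{d})$, and the local intersection number at every such point equals $2\,\mathrm{mult}_p(\mfk{d})$, which is even. Hence $C$ is a smooth plane curve of degree $s$ tangent to $B$ of even order at every point of $C\cap B$, which is precisely condition (3) of Theorem \ref{thm:neccsuff} for the double cover $\pi:X\ra\mbP^2$ branched along $B$. By that theorem, $X$ admits an Ulrich line bundle.

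The main obstacle I expect is the global smoothness claim in part (a): one must simultaneously guarantee that a single $G$ renders $F+f_CG$ smooth at each of the (up to $s^2$) base points as well as off them. The gradient calculation above reduces the base-point check to a finite list of Zariski-open conditions on $G$, after which classical Bertini controls the complement; combining these confirms that a generic $G$ yields a smooth $B$.
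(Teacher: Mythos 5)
Your proposal is correct and follows essentially the same route as the paper: part (a) is exactly the paper's Proposition~\ref{tangentbranch} (lift $2\mfk{d}$ through the restriction map coming from $0\to\mcOP(s)\to\mcOP(2s)\to\mcO_C(2s)\to 0$, then apply Bertini to the pencil $F+f_CG$ together with a gradient check at the base points $\mathrm{supp}\,\mfk{d}$, where smoothness of $C$ forces $\nabla f_C\neq 0$), and part (b) is the same reduction to condition (3) of Theorem~\ref{thm:neccsuff}, which the paper routes through Theorem~\ref{global reducibility}. The only cosmetic differences are that you verify the full gradient rather than a single partial derivative and you name the lift of $2\mfk{d}$ as $t^2$ explicitly; neither changes the argument.
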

Next, we concretely exhibit double planes branched over degree $2s$
curves with $s$ even, which admit Ulrich line bundles.
\begin{thm}\label{fermat}
 Let $B\subset \mbb{P}^2$ be a Fermat curve of degree $2s$ where
 $s$ is an even integer. Then the double plane $X$ branched along 
 $B$ admits Ulrich line bundles.
\end{thm}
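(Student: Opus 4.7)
The plan is to apply Theorem \ref{thm:neccsuff} in the direction $(3)\Rightarrow(1)$: it suffices to exhibit a smooth plane curve $C\subset\mbb{P}^2$ of degree $s$ meeting the Fermat branch curve $B=\{x^{2s}+y^{2s}+z^{2s}=0\}$ with even tangency order at every point of $C\cap B$. The key observation is that $B$ is itself a cyclic cover of a smooth conic, which allows us to transfer the tangency condition to the elementary setting of a tangent line to a conic.

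Consider the finite surjective Galois morphism
\[
\phi:\mbb{P}^2\ra\mbb{P}^2,\qquad [x:y:z]\mapsto[x^s:y^s:z^s],
\]
of degree $s^2$, which is étale over $\{uvw\neq 0\}$ and totally ramified along the three coordinate lines. Then $B=\phi^{-1}(B')$ as divisors, where $B'=\{u^2+v^2+w^2=0\}$ is a smooth conic in the target. I would choose a line $C'$ tangent to $B'$ at a point $P'=[u_0:v_0:w_0]$ with $u_0 v_0 w_0\neq 0$, which is possible since $B'$ contains only six points lying on a coordinate axis. Setting $C':\,u_0 u+v_0 v+w_0 w=0$, define $C:=\phi^{-1}(C')=\{u_0 x^s+v_0 y^s+w_0 z^s=0\}$; this is a smooth plane curve of degree $s$ since none of its coefficients vanish (the partial derivatives cannot have a common projective zero).

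To verify the tangency condition, observe that $C'\cdot B'=2[P']$ because $C'$ is a simple tangent to the smooth conic $B'$ at $P'$. Since $\phi$ is finite flat and étale near each point of $\phi^{-1}(P')$, the preimage $\phi^{-1}(P')$ consists of exactly $s^2$ distinct reduced points, and pulling back the intersection cycle through the étale locus yields
\[
 C\cdot B=\phi^*(C'\cdot B')=2\sum_{P\in\phi^{-1}(P')}[P].
\]
Hence $C$ meets $B$ at exactly $s^2$ distinct points, at each of which the local intersection multiplicity equals $2$. This verifies condition $(3)$ of Theorem \ref{thm:neccsuff}, and the theorem then produces an Ulrich line bundle on $X$.

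The conceptual core is the identification of the Fermat branch curve as a $(\mu_s\times\mu_s)$-Galois cover of a smooth conic, which reduces an a priori complicated even-tangency condition for high-degree curves to the trivial geometry of a tangent line on a conic. The only technical point is the standard fact that local intersection multiplicities pull back faithfully through the étale locus of $\phi$, which is where the hypothesis $u_0 v_0 w_0 \neq 0$ on the tangent point is used. Interestingly, the hypothesis that $s$ is even plays no role in this argument; the same construction would yield an Ulrich line bundle for any Fermat curve of degree $2s\geq 4$.
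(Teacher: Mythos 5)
Your proof is correct, and it takes a genuinely different route from the paper's. The paper works with the explicit half-degree Fermat curve $C=\{x^s-y^s-z^s=0\}$ and computes directly that $C$ meets $B$ at the $2s$ points of $B$ lying on the coordinate lines, each with intersection multiplicity exactly $s$; the evenness of $s$ is then essential, since $s$ itself is the tangency order. In your language, the paper's $C$ is $\phi^{-1}(C')$ for a \emph{secant} line $C'$ of the quotient conic $B'$ passing through two of its six axis points, and the order-$s$ tangency upstairs is produced by the ramification of $\phi$ along the coordinate triangle. You instead pull back a genuine tangent line at a point $P'$ of $B'$ with $u_0v_0w_0\neq 0$, so the entire intersection $C\cap B=\phi^{-1}(P')$ sits in the \'etale locus and every local multiplicity equals $2$. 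Each step checks out: $C$ is smooth because no coefficient vanishes, the set-theoretic identity $C\cap B=\phi^{-1}(C'\cap B')$ is immediate, and local intersection numbers are preserved under the formal isomorphisms of local rings given by \'etaleness; this verifies condition (3) of Theorem \ref{thm:neccsuff}. What your approach buys is exactly what you observe at the end: the hypothesis that $s$ is even becomes superfluous, so you have in fact strengthened Theorem \ref{fermat} to Fermat curves of every degree $2s\geq 2$ (consistent with Theorem \ref{sexticthm}, since Fermat curves are not generic, and in the spirit of Theorem \ref{s>2}). The paper's argument is more self-contained in that it never invokes the covering $\phi$ and only requires checking tangency at explicitly listed points, but it is correspondingly less flexible and needs the parity assumption.
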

We show this by identifying explicitly a degree $s$ curve $C$ tangential
to $B$ of even order at all points of $C \cap B$. We finally end with some
remarks about the existence of Ulrich line bundles in higher dimensions.
In this direction, we prove the following theorem in
$\mathcal{x}\,$\ref{ngeq3}.
\begin{thm}
 Consider a smooth $d$-fold cyclic cover $\pi:X\ra\mbb{P}^n$ 
 branched over a smooth hypersurface $B\subset\mbb{P}^n$ where $n\geq 3$. If $d\leq n$, then
 $X$ does not admit Ulrich line bundles with respect to $\pi$.
\end{thm}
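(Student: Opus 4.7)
The strategy is to show that $\operatorname{Pic}(X)$ is generated by $\pi^{*}\mathcal{O}_{\mathbb{P}^n}(1)$, and then rule out Ulrich line bundles by a direct projection-formula computation. Writing $\deg B = ds$ (the cover exists iff $d \mid \deg B$), we have $\pi_{*}\mathcal{O}_X = \bigoplus_{i=0}^{d-1}\mathcal{O}_{\mathbb{P}^n}(-is)$ as usual.

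To control $\operatorname{Pic}(X)$, I would realize $X$ as the quasi-smooth ample hypersurface $\{y^d - f = 0\}$ of weighted degree $ds$ in the weighted projective space $\mathbb{P}(s,1,\ldots,1)$; since $X$ avoids the singular point $[1:0:\cdots:0]$ and $\dim X = n \ge 3$, the Lefschetz hyperplane theorem for quasi-smooth ample hypersurfaces yields both $\pi_1(X) = 1$ and $H^{2}(X,\mathbb{Q}) \cong H^{2}(\mathbb{P}(s,1,\ldots,1),\mathbb{Q}) = \mathbb{Q}$. The projection-formula vanishing $h^{2}(X,\mathcal{O}_X) = \sum_{i} h^{2}(\mathbb{P}^n,\mathcal{O}(-is)) = 0$ (using $n \ge 3$) then gives $\operatorname{Pic}(X) = H^{2}(X,\mathbb{Z}) \cong \mathbb{Z}$. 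Let $L_0$ denote the positive generator and write $\pi^{*}\mathcal{O}_{\mathbb{P}^n}(1) = mL_0$ with $m \ge 1$. The identity $(\pi^{*}H)^{n} = \deg\pi = d$ gives $m^{n}(L_0)^{n} = d$, so $m^{n} \le d$. Here the hypothesis $d \le n$ enters: for any $m \ge 2$ we would have $m^{n} \ge 2^{n} > n \ge d$, a contradiction, so $m = 1$ and $\pi^{*}\mathcal{O}_{\mathbb{P}^n}(1)$ itself generates $\operatorname{Pic}(X)$.

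Every line bundle on $X$ is therefore of the form $L = \pi^{*}\mathcal{O}_{\mathbb{P}^n}(k)$ for some $k \in \mathbb{Z}$, and the projection formula gives $\pi_{*}L = \bigoplus_{i=0}^{d-1}\mathcal{O}_{\mathbb{P}^n}(k - is)$; for this to equal $\mathcal{O}_{\mathbb{P}^n}^{\oplus d}$ one would need the $d \ge 2$ distinct integers $k, k-s, \ldots, k-(d-1)s$ all to vanish, which is impossible since $s \ge 1$. The main obstacle is the Lefschetz input, which is sensitive to the cyclic-quotient singularity of the weighted ambient; since $X$ is quasi-smooth and avoids that point, the standard Lefschetz theorem for quasi-smooth ample hypersurfaces (in the style of Dolgachev or Dimca) applies. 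The bound $d \le n$ is used only in the intersection-theoretic step bounding $m^n \le d$, and in fact the same argument goes through whenever $d < 2^{n}$.
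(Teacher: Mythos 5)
Your proof is correct, but it follows a genuinely different route from the paper's. The paper first observes that an Ulrich line bundle $L$ would be globally generated with $h^0(X,L)=d$, forcing $L^{d}=0$; it then splits into cases: for the double cover of $\mathbb{P}^3$ it gives a direct geometric argument (two divisors of $X$ must meet over the branch locus, since $n$ hypersurfaces in $\mathbb{P}^n$ always intersect), and for all remaining cases it quotes Noether--Lefschetz-type results (the cubic threefold case, and the paper of Lyu--Pan for the rest) to get $\mathrm{Pic}\,X\cong\mathbb{Z}$, whence every nonzero effective divisor is ample and $L^{d}\neq 0$. You instead prove the stronger statement that $\mathrm{Pic}\,X$ is generated by $\pi^{*}\mathcal{O}_{\mathbb{P}^n}(1)$ itself --- via the weighted-projective Lefschetz theorem plus the intersection-number argument $m^{n}(L_0)^{n}=d$ --- and then dispose of Ulrichness by a one-line projection-formula computation. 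This buys uniformity (no case analysis) and, as you note, a strictly stronger theorem: your argument only needs $d<2^{n}$, whereas the paper's $L^{d}=0$ step genuinely requires $d\leq n$ for the intersection product to make sense. Two caveats. First, your Lefschetz input is exactly the content of the Lyu--Pan result the paper cites, so you are re-deriving rather than bypassing it, and the citation deserves care: the ambient $\mathbb{P}(s,1,\dots,1)$ is singular with its singular point lying in the \emph{complement} of $X$, so the homotopy version (needed for $\pi_1(X)=1$) is not the vanilla smooth-ambient Lefschetz theorem; alternatively $\pi_1(X)=1$ follows classically from Zariski's computation $\pi_1(\mathbb{P}^n\setminus B)\cong\mathbb{Z}/\deg B$ for smooth $B$, $n\geq 2$. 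Second, you should make explicit that $\pi_1(X)=1$ is what kills torsion in $H^{2}(X,\mathbb{Z})$ (via universal coefficients), without which ``the positive generator $L_0$'' and the reduction of every line bundle to $\pi^{*}\mathcal{O}_{\mathbb{P}^n}(k)$ would not be justified; you state the hypothesis but do not say where it is used.
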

The above theorem shows that, in general, we cannot expect Ulrich line 
bundles on $d$-fold cyclic coverings of $\mbb{P}^n$ when
$d\leq n$. Double planes, i.e.~the case $d = n = 2$, are the only exceptions
and are indeed special in this respect.

\section{Preliminaries}\label{prelim}
We now briefly recall some preliminaries about Ulrich bundles. 
Refer \cite{AB} for a comprehensive introduction to Ulrich
bundles.
\subsection{Ulrich bundles}\label{prelim-ub}
The definition of Ulrich bundles which we will use
in this article is as follows.
\newtheorem{defn}[thm]{Definition}
\begin{defn}\label{piulrich}
Consider a smooth projective variety $X$ admitting a finite 
 morphism $\pi:X\ra\mbb{P}^{\emph{dim}\,X}$. A vector bundle $E$
 on $X$ is said to be Ulrich with respect to the morphism $\pi$
 if the direct image $\pi_*E$ is the trivial vector bundle.
\end{defn}
The above definition is a special version of the general definition of Ulrich
bundles, which we recall now.
\begin{defn}\label{ulrich}
Let $X$ be a smooth projective variety over $\mbb{C}$, and
E be a vector bundle on $X$. Consider a very ample 
line bundle $\mcO_X(1)$ on $X$ inducing the embedding $X\hra\mbb{P}^N$.
Then $E$ is Ulrich on $(X,\mcO_X(1))$ if it satisfies one
of the following equivalent conditions.
\begin{enumerate}
 \item The direct image vector bundle $\pi_*E$ is trivial for 
 any finite linear projection $\pi$:
\begin{displaymath}
 \xymatrix{ X\ar@{^{(}->}[r]\ar[dr]_{\pi} & \mbb{P}^N\ar[d]\\
			    & \mbb{P}^{\text{dim}\,X}}
\end{displaymath}
 \item $H^i(X,E(-p))=0$ for $1\leq p\leq\emph{dim}\,X$ and for all $i$.
\end{enumerate}
\end{defn}
Note that, one can also define Ulrich bundles with respect to ample
and globally generated polarizations as in the paper \cite{AK}. In
\cite[Proposition 3]{AK}, the authors record that the existence of
Ulrich bundles with respect to an ample and globally generated line
bundle $\mcO_X(1)$ guarantees the existence of Ulrich bundles with
respect to $\mcO_X(d)$ for $d>0$.  Our Definition \ref{piulrich} is in
fact with respect to the ample and globally generated line bundle
$\mcO_X(1)=\pi^*\mcO_{\mbb{P}^{\text{dim}\,X}}(1)$.

It is also interesting to note that Ulrich bundles have several
favourable properties. For example, they are globally generated,
semistable and, if not stable, they are extensions of Ulrich bundles
of smaller rank.
\subsection{Double planes}\label{hsurfaces}
We now recall some basic facts about double planes. 

Consider a smooth curve $B\subset\mbb{P}^2 $ of even degree $2s$ which
is defined by a polynomial (section) $F\in H^0(\mbb{P}^2, \mcOP(2s))$.
We can construct a double cover $X$ of $\mbb{P}^2$ branched along $B$
as follows.  Let $\mbb{A}$ denote the total space of the line bundle
$A=\mcOP(s)$, $p:\mbb{A}\ra \mbb{P}^2$ the projection and
$t\in H^0(\mbb{A},p^*A)$ be the tautological section.  Define $X$ to
be the subvariety of $\mbb{A}$ defined by the section
$t^2-p^*F\in H^0(\mbb{A},p^*A^{\otimes
  2})=H^0(\mbb{A},p^*\mcOP(2s))$. The morphism
$p|_X=:\pi:X\ra \mbb{P}^2$ is of degree two and branched over the
curve $B$. We make note of the following observations.
\begin{enumerate}
 \item The double plane $X$ 
 \emph{is smooth if and only if} $B$ \emph{is smooth},
cf.\cite[Section on Double covers, Chapter II]{RF} and 
\cite[Proposition 4.1.6]{Laz}.
\item The inverse image $R\subset X$ of the branch curve
$B$ is isomorphic to $B$, and $\pi^*A=\pi^*\mcOP(s)=\mcO_X(R)$.
\item The direct image $\pi_*\mcO_X$ is a rank two vector bundle on
  $\mbb{P}^2$ since $\pi$ is a finite flat morphism.  The trace map
  $\text{Tr}:\pi_*\mcO_X\ra\mcOP$ gives a splitting of the natural
  morphism $\mcOP\ra\pi_*\mcO_X $.  In particular it can be shown that
  $\pi_*\mcO_X=\mcOP\oplus \mcOP(-s)$, cf. \cite[Remark 4.1.7]{Laz}.
\item The canonical bundles of $X$ and $\mbb{P}^2$ are related by the
  following relation ( cf.  \cite[$\mathcal{x}\,$1.41]{OD} ) :
$$K_X=\pi^*K_{\mbb{P}^2}\otimes\mcO_X(R)=\pi^*\mcOP(-3+s)\,.$$
\end{enumerate}
\subsection{Notation and Conventions}
\begin{enumerate}
\item By an Ulrich line bundle on the double plane $\pi:X\ra\mbb{P}^2$, we mean
an Ulrich line bundle with respect to $\pi$, i.e. a line bundle
 $L$ on $X$ such that $\pi_*L=\mcOP\oplus\mcOP$.
\item If $\mfk{d}$ is an effective divisor on a curve $C$, then
  $\text{Supp}\,\mfk{d}$ denotes the support of $\mfk{d}$.
 \item If $L$ is a line bundle on a surface $X$, then $|L|_{sm}$ is
 the set of smooth curves in the linear system $|L|$.
 \item \textbf{\underline{Tangent curves}} - Let $B$ and $C$ be two plane curves
 and $P$ be a smooth point of $B$ and $C$. Let $(B\cdot C)_P$ denote 
 the intersection multiplicity of $B$ and $C$ at $P$.
 \begin{itemize}
  \item[-] If $(B\cdot C)_P=1$, then $B$ and $C$ intersect transversally
  at the point $P$.
  \item[-] If $(B\cdot C)_P=2$, then $B$ is called a simple tangent to $C$ at $P$ or a 
  tangent of order 2 to $C$ at $P$.
  \item[-] If $(B\cdot C)_P=r>2$, then $B$ is said to be tangential of order $r$ to $C$ at $P$.
  \end{itemize}
 \end{enumerate}
\section{Conditions for the existence of Ulrich line bundles}\label{conditions}
\newtheorem{lemma}[thm]{Lemma}
In this section, we discuss the
conditions for the existence of Ulrich line bundles on double
planes. We start by making a remark regarding the global generation of
Ulrich bundles. Recall that $X$ is a double cover of $\mbb{P}^2$
branched along a smooth curve $B$ of degree $2s$.
\newtheorem{rmk}[thm]{Remark}
\begin{rmk}\label{glob gtd}
Suppose that $L$ is an Ulrich line bundle on $X$
with respect to $\pi$. Then 
$\pi_*L\simeq\mcOP\oplus\mcOP$ and so $h^0(X,L)=2$.
Note that in general an Ulrich bundle is always globally 
generated, cf. \cite[Theorem 1]{AB}. When $X$ is a 
double plane, there is a surjective morphism of locally free sheaves 
(\cite[Chapter 2, Lemma 29]{RF})
$$\mcO_X\oplus\mcO_X\simeq\pi^*\pi_*L\ra L\ra 0\,$$
which also shows the global generation of $L$ in our context.
\end{rmk}
The following lemma gives a necessary condition for a double plane to
admit Ulrich line bundles.
\begin{lemma}\label{L not ample}
 Let $X$ be a smooth double plane that admits an Ulrich
 line bundle. Then the Picard number $\rho(X)>1$ and
 in particular $\emph{Pic}\,X\not\simeq\mbb{Z}$.
\end{lemma}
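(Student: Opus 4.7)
My plan is to extract a second, independent polarization on $X$ from the Ulrich hypothesis by producing a nontrivial morphism $X \to \mathbb{P}^1$. The starting point is the standard observation that if $L$ is Ulrich then $h^0(X,L) = h^0(\mathbb{P}^2, \pi_*L) = h^0(\mcOP \oplus \mcOP) = 2$, and $L$ is globally generated by Remark \ref{glob gtd}. Moreover $L$ cannot be trivial: if it were, then $\pi_*L = \pi_*\mcO_X = \mcOP \oplus \mcOP(-s)$, which disagrees with $\mcOP \oplus \mcOP$ for every $s \geq 1$.

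Next, I would use the complete linear system $|L|$ to define a morphism $f : X \to \mbb{P}(H^0(X,L)^*) = \mbb{P}^1$ with $L = f^*\mcO_{\mbb{P}^1}(1)$. Since $L$ is nontrivial and globally generated, $f$ is non-constant; as $X$ is a projective surface, $f$ is surjective, and its general fiber $F$ is an effective curve. By construction $L \cdot F = \deg(L|_F) = 0$.

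To compare with $\pi^*\mcOP(1)$, I would use that $\pi$ is a finite morphism, so $\pi$ does not contract the curve $F$; hence $\pi(F)$ is an honest curve in $\mbb{P}^2$, and $\pi^*\mcOP(1) \cdot F = \deg \pi(F) > 0$. Therefore $L$ and $\pi^*\mcOP(1)$ are two classes in $\mathrm{NS}(X) \otimes \mathbb{Q}$ whose intersection pairings with $F$ disagree (one is zero, the other positive), so they are linearly independent. This gives $\rho(X) \geq 2$ and in particular $\mathrm{Pic}\,X \not\simeq \mathbb{Z}$.

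The argument is quite direct and I do not anticipate a real obstacle; the only point requiring a moment of care is the passage from $h^0(L) = 2$ and global generation to an actual surjective map onto $\mbb{P}^1$, which relies on knowing $L$ is not the structure sheaf (otherwise the ``pencil'' would collapse). That nontriviality is handled at the outset by contrasting $\pi_*L$ with $\pi_*\mcO_X$, after which the rest of the deduction is formal.
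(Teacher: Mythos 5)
Your argument is correct in substance, and it finishes differently from the paper's. Both proofs turn on the same key fact: an Ulrich line bundle $L$ has $h^0(X,L)=2$ and is globally generated, so two members of $|L|$ are disjoint and hence $L^2=0$ --- your general fibre $F$ of $f$ is just a member of $|L|$, so $L\cdot F=L^2$. The paper then argues by contradiction: if $\mathrm{Pic}\,X\simeq\mbb{Z}$, the nonzero effective class $L$ would have to be ample, which is incompatible with $L^2=0$. You instead exhibit two numerically independent classes, $L$ and $\pi^*\mcOP(1)$, directly; this avoids the step ``nonzero effective $\Rightarrow$ ample when $\mathrm{Pic}=\mbb{Z}$'' and yields $\rho(X)\geq 2$ without a contrapositive, which is arguably cleaner. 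One small point to tighten: from $L\cdot F=0$ and $\pi^*\mcOP(1)\cdot F>0$ you may only conclude that the two classes are \emph{distinct}, not yet that they are linearly independent in $\mathrm{NS}(X)\otimes\mbb{Q}$ --- in a relation $aL+b\,\pi^*\mcOP(1)=0$, pairing with $F$ kills $b$, but to kill $a$ you need $L\not\equiv 0$ numerically. That is immediate (since $\pi$ is finite, $\pi^*\mcOP(1)$ is ample, and $L$ is a nonzero effective class, so $L\cdot\pi^*\mcOP(1)>0$), but it should be stated; nontriviality of $L$ as a line bundle, which is what you verify via $\pi_*L\neq\pi_*\mcO_X$, is not by itself the same as numerical nontriviality.
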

\begin{proof}
Let $L$ be an Ulrich line bundle on $X$. 
Since $h^0(X,L)=2$, $L$ is a line bundle 
corresponding to the class of a non-zero effective divisor.
If $\text{Pic}\,X\simeq \mbb{Z}$, then $L$ has to be
ample.  By Remark \ref{glob gtd}, $L$ is globally 
generated and we have a surjection
$$\mcO_X\oplus \mcO_X\ra L\ra 0\,.$$
So $L$ is generated by two sections which correspond to
curves which do not intersect in the surface $X$. Thus $(L)^2=0$, implying
that $L$ is not ample. Hence $\rho(X)>1$.
\end{proof}
We now state Theorem \ref{thm:neccsuff} from the Introduction and give a proof.
\newtheorem*{thmneccsuff}{Theorem 1.1}
\begin{thmneccsuff}
Suppose that $\pi:X\ra \mbb{P}^2$ is a double plane 
branched over a smooth curve $B$ of degree $2s$. Let $R$ denote the smooth
ramified curve in $X$ and $\sigma$ 
denote the involution of $X$ obtained by interchanging the two sheets
of the double cover. Then, the following are equivalent.
\begin{enumerate}
 \item The surface $X$ admits an Ulrich line bundle with respect to $\pi$.
 \item There is a smooth curve $D\subset X$ 
 such that $D\neq \sigma(D)$ and $D\cdot \sigma(D)=D\cdot R=\sigma(D)\cdot R =s^2$.
 \item There is a smooth curve $C$ of degree $s$ 
in $\mbb{P}^2$ which is a tangent to $B$ of even 
order at every point of $C\cap B$.
\end{enumerate}
 \end{thmneccsuff}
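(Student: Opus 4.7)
My plan is to establish $(1)\Leftrightarrow(2)$ together with $(2)\Rightarrow(3)$ in this section, and to defer $(3)\Rightarrow(2)$ to Section~\ref{tangent}. The bookkeeping device I would use throughout is the norm map $\mathrm{Nm}\colon\mathrm{Pic}(X)\to\mathrm{Pic}(\mbb{P}^2)$, characterised by $L\otimes\sigma^*L=\pi^*\mathrm{Nm}(L)$, together with the identity $\det\pi_*L=\mathrm{Nm}(L)\otimes\mcOP(-s)$ coming from $\pi_*\mcO_X=\mcOP\oplus\mcOP(-s)$.

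For $(1)\Rightarrow(2)$, if $L$ is Ulrich then $\det\pi_*L=\mcOP$ forces $\mathrm{Nm}(L)=\mcOP(s)$ and hence $L\cdot\pi^*H=s$. Substituting $\chi(L)=2$, $\chi(\mcO_X)=1+\frac{(s-1)(s-2)}{2}$ and $K_X=\pi^*\mcOP(s-3)$ into Riemann--Roch on $X$ gives $L^2=0$; expanding $(L+\sigma^*L)^2=2s^2$ then yields $L\cdot\sigma^*L=s^2$. By Remark~\ref{glob gtd} the system $|L|$ is a base-point-free pencil, so Bertini produces a smooth $D\in|L|$; and $D=\sigma(D)$ would imply $L=\sigma^*L$, whence $L^{\otimes 2}=\pi^*\mcOP(s)$ in $\mathrm{Pic}\,X$ and thus $L^2=s^2/2>0$ for $s\geq 1$, contradicting $L^2=0$.

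For $(2)\Rightarrow(1)$ and $(2)\Rightarrow(3)$ I would treat both simultaneously. Set $L=\mcO_X(D)$, $C=\pi(D)$ and $d=\deg C$. Since $D\neq\sigma(D)$, the map $\pi|_D\colon D\to C$ is birational and $\pi^*C=D+\sigma(D)$. At each $p\in B\cap C$, choosing local coordinates so that $B=\{x=0\}$, $X=\{t^2=x\}$ and $C=\{g(x,y)=0\}$, the equation $g(t^2,y)$ admits a factorisation $g_1(t,y)\cdot g_1(-t,y)$ into two distinct $\sigma$-conjugate analytic branches precisely when $(B\cdot C)_p=2k_p$ is even, and in that case $(D\cdot\sigma(D))_{\pi^{-1}(p)}=k_p$. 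Summing yields $s^2=D\cdot\sigma(D)=\tfrac{1}{2}(B\cdot C)=sd$, so $d=s$, $\mathrm{Nm}(L)=\mcOP(s)$ and $L^2=0$. Moreover $C$ is smooth everywhere: at $p\notin B$ by \'etale descent of the smoothness of $D$, and at $p\in B\cap C$ because the local equation $g=y^{2k_p}-xu(y)$ (with $u(0)\neq 0$) has non-vanishing $\partial_xg$. This proves $(3)$. Returning to $(1)$: $\chi(\pi_*L)=\chi(L)=2$ with $c_1(\pi_*L)=0$ forces $c_2(\pi_*L)=0$ by Riemann--Roch on $\mbb{P}^2$. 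Any saturated destabilising subbundle $\mcOP(b)\hookrightarrow\pi_*L$ with $b\geq 1$ would produce $c_2=-b^2<0$, so $\pi_*L$ is semistable; the section from $D$ then yields a saturated inclusion $\mcOP\hookrightarrow\pi_*L$ with locally free cokernel $\mcOP$, and the vanishing $H^1(\mbb{P}^2,\mcOP)=0$ splits the resulting extension, giving $\pi_*L\simeq\mcOP\oplus\mcOP$.

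I expect $(3)\Rightarrow(2)$ to be the main obstacle, and it will be handled in Section~\ref{tangent}. The local factorisation above already produces two analytic branches of $\pi^{-1}(C)$ at each $p\in B\cap C$, but globalising these into a single smooth curve $D\subset X$ with $\sigma(D)\neq D$ requires ruling out non-trivial monodromy of the \'etale double cover $\pi^{-1}(C\setminus B)\to C\setminus B$ and verifying that the local branches glue consistently. This is a genuinely global obstruction, to be addressed by a combination of topological arguments on $\mbb{P}^2\setminus(B\cup C)$ and a deformation-theoretic input.
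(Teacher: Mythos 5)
Your overall architecture coincides with the paper's: you prove $(1)\Leftrightarrow(2)$ and $(2)\Rightarrow(3)$ now and defer $(3)\Rightarrow(2)$ to the topological section. Your $(1)\Rightarrow(2)$ via the norm map and Riemann--Roch is a correct alternative to the paper's route (which gets $L^2=0$ from the two disjoint members of the base-point-free pencil $|L|$ and gets $\deg C=s$ from a genus computation), and your exclusion of $D=\sigma(D)$ via $L^{\otimes 2}=\pi^*\mcOP(s)$ works. The local factorisation in $(2)\Rightarrow(3)$ is the computation of Lemma \ref{local reducible} and Remark \ref{locintmult} run in reverse; like the paper, you are quick about why $C=\pi(D)$ is smooth at the points of $B\cap C$ (the normal form $g=y^{2k_p}-xu(y)$ presupposes that the branch of $D$ at the ramification point is a graph over the base direction), but this is no worse than the published argument.

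The genuine gap is in your $(2)\Rightarrow(1)$. A saturated rank-one subsheaf $\mcOP(b)\hra\pi_*L$ has quotient that is torsion-free but not necessarily locally free, i.e.\ of the form $I_Z(-b)$ for a zero-dimensional subscheme $Z$; hence $c_2(\pi_*L)=\ell(Z)-b^2$, not $-b^2$. Since $c_2=0$ only forces $\ell(Z)=b^2$, no contradiction results: non-split locally free extensions $0\ra\mcOP(1)\ra E\ra I_Z(-1)\ra 0$ with $\ell(Z)=1$ do exist on $\mbb{P}^2$ (the Cayley--Bacharach condition is vacuous for $|\mcOP(-1)|$), and such $E$ have $c_1=c_2=0$ while being unstable. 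So semistability of $\pi_*L$ does not follow from the Chern class computation alone. The natural repair is to bound sections: $\mcOP(b)\subset\pi_*L$ with $b\geq 1$ would force $h^0(\pi_*L)=h^0(L)\geq 3$, so it suffices to know $h^0(L)=2$ --- but you never establish this, having only $\chi(L)=2$. Getting $h^0(L)=2$ requires showing $\mcO_X(D)|_D\simeq\mcO_D$ by adjunction and then running the long exact sequence of $0\ra\mcO_X\ra\mcO_X(D)\ra\mcO_D\ra 0$ together with $H^1(X,\mcO_X)=0$; this is exactly what the paper does. With the two sections in hand the paper in fact bypasses stability altogether: the induced map $\mcOP^{\oplus 2}\ra\pi_*L$ has torsion cokernel with trivial first Chern class, hence supported in codimension two, and two locally free sheaves agreeing outside codimension two are isomorphic. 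You should either adopt that argument or insert the $h^0(L)=2$ computation before invoking semistability.
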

\begin{proof}
 As $X$ is ramified 
over a smooth curve $B$ of degree $2s$,
we have $\pi_*\mcO_X=\mcOP\oplus\mcOP(-s)$. 

\paragraph{\underline{\textbf{Part 1 - $(2)\implies (3)$}}}
Consider a smooth curve $D\subset X$ as in the part (2) of the
statement of the theorem.
Let $D\cap\sigma(D)=\mfk{d}$ where
$\mfk{d}$ is an effective divisor on $D$ of degree $s^2$. Denote
$C:=\pi(D)$.  Since $D\neq \sigma(D)$, we have
$\pi^{-1}(C)=D+\sigma(D)$. The restricted morphism
$\pi|_{D+\sigma(D)}:D+\sigma(D)\ra C$ is of degree two and in
particular, it is a two sheeted covering map away from the support of
$\mfk{d}$. 
In fact, the restricted map $D\ra C$ is of degree one and
hence is a normalization map. Thereby, we have the following 
relation between the arithmetic genera of $C$ and $D$:
\begin{equation}\label{a-genus}
p_a(C)=p_a(D)+\Sigma_{p\in C}\, \delta_p\,, 
\end{equation}
where $p_a$ denotes the arithmetic genus of the curve, and
$\delta_p=\text{length}(\widetilde{\mcO_p}/\mcO_p)$, 
cf. \cite[Exercise IV.1.8]{RH1}.
 
 Since the morphism $\pi$ is of degree two,
 $\pi^*C\cdot \pi^*B=2(C\cdot B)\,.$
 But $${\pi^*C\cdot \pi^*B=(D+\sigma(D))\cdot 2R}\,.$$
 The hypothesis that $D\cdot R=\sigma(D)\cdot R=s^2$ gives
 \begin{equation}\label{intno}
 (C\cdot B)=(D+\sigma(D))\cdot R=2s^2\,. 
 \end{equation}
 Since $B$ is a degree
 $2s$ curve, the Bezout's theorem \cite[Corollary I.7.8]{RH1} shows that
 $C\in |\mcOP(s)|$, and so 
 $$p_a(C)=\frac{(s-1)(s-2)}{2}\,.$$  
 Next, from the adjunction formula for $D\subset X$, we get
$$ \omega_D=(K_{X}\otimes\mcO_X(D))|_D,\ \text{i.e.}$$
\begin{equation}\label{a-genus of D}
 2p_a(D)-2=K_X\cdot D+D^2\,.
\end{equation}
From $\mathcal{x}\,$ \ref{hsurfaces} (4), the canonical line bundle of
$X$ is $K_X=\pi^*\mcOP(s-3)$. Then,
$$K_X\cdot (D+\sigma(D))= \pi^*\mcOP(s-3)\cdot \pi^*C=\pi^*\mcOP(s-3)\cdot\pi^*\mcOP(s)=2s(s-3)\,.$$
The curves $D$ and $\sigma(D)$ are isomorphic by a global automorphism and hence
$$K_X\cdot D= s(s-3)\,.$$
In order to compute $(D)^2$, we consider $(\pi^*C)^2=2(C)^2$, giving
 $$(D+\sigma(D))^2= 2s^2\text{, i.e.}$$
 $$D^2+\sigma(D)^2+2D\cdot \sigma(D)= 2s^2\,.$$
By hypothesis, $D\cdot \sigma(D)=s^2$ and $D^2=\sigma(D)^2$ 
by the symmetry of $D$ and $\sigma(D)$.
Thus $D^2=\sigma(D)^2=0$. We thus compute the arithmetic genus of $D$ 
from \eqref{a-genus of D} to be
$$p_a(D)=\frac{(s-1)(s-2)}{2}\,.$$
Thus $p_a(C)=p_a(D)$, and from \eqref{a-genus}, we get $\Sigma_{p\in C}\, \delta_p=0$. 
Hence $C$ is a smooth plane curve, and in fact $C\simeq D\simeq \sigma(D)$.
Finally, from equation \eqref{intno}, we get the following intersection
 multiplicity for any 
 $x\in\text{supp}\,\mfk{d}$,
 $$(C\cdot B)_{\pi(x)}=(D\cdot R)_x+(\sigma(D)\cdot R)_x=2(D\cdot R)_x\,.$$
 This shows that $C$ is a tangent of an even order to the branch curve $B$ at
 every point where it intersects $B$. \emph{Thus (2) implies (3)}. 
 
\paragraph{\underline{\textbf{Part 2 - $(2)\implies (1)$}}}
From Part 1, we have $C:=\pi(D)$ is a smooth curve in $\mbb{P}^2$ with
$\trm{degree\,}C= s$ and $D^2=0$.
Consider the commutative diagram:
\begin{displaymath}
  \xymatrix{D\ar@{^{(}->}[r]^e\ar[dr]_{\sim} & D+\sigma(D) \ar@{^{(}->}[r]_{j}\ar[d]^{\pi} & X\ar[d]^{\pi}\\
    & C \ar@{^{(}->}[r]_{i} & \mbb{P}^2}
\end{displaymath}
The canonical line bundle of the smooth curve $C$ is given by the
adjunction formula,
$$\omega_C=(K_{\mbb{P}^2}\otimes\mcOP(C))|_C=i^*\mcOP(s-3)\,.$$
Since $\pi\circ e:D\ra C$ is an isomorphism, 
\begin{equation}\label{normalbundle1}
\omega_D\simeq e^*\pi^* i^*\mcOP(s-3). 
\end{equation}
Again, from
the adjunction formula $\omega_D=(K_{X}\otimes\mcO_X(D))|_D$ for $D\subset X$,
 we get,
\begin{equation}\label{normalbundle2}
\omega_D=e^*j^*\pi^*\mcOP(s-3)\otimes \mcO_X(D)|_D=e^*\pi^*i^*\mcOP(s-3)\otimes \mcO_X(D)|_D\,.
\end{equation}
Thus, by comparing equations \eqref{normalbundle1} and \eqref{normalbundle2},
we get
$$\mcO_X(D)|_D=\mcO_D\,.$$
 Consider the following short exact sequence in $X$:
 \begin{equation}\label{ses1}
0\ra \mcO_X\ra \mcO_X(D)\ra j_*e_*\mcO_D\ra 0\,,
 \end{equation}
and the associated long exact sequence of cohomology:
$$0\ra H^0(X,\mcO_X)\ra H^0(X,\mcO_X(D))\ra H^0(D,\mcO_D)\ra H^1(X,\mcO_X)\ra\cdots\,.$$
Since $\pi$ is a finite morphism,
$H^1(X,\mcO_X)\simeq H^1(\mbb{P}^2,\mcOP\oplus\mcOP(-s))=0\,.$ Hence,
$$h^0(X,\mcO_X(D))=2= h^0(\mbb{P}^2,\pi_*\mcO_X(D))\,.$$
We next pushforward \eqref{ses1} to $\mbb{P}^2$ by $\pi$ to get:
$$0\ra \pi_*\mcO_X\ra \pi_*\mcO_X(D)\ra \pi_*j_*e_*\mcO_D\ra 0\,.$$
Since 
$\pi\circ e:D\ra C$ is an isomorphism, $\pi_*j_*e_*\mcO_D\simeq i_*\mcO_C$.
So $c_1(\pi_*j_*e_*\mcO_D)=[C]=[s]$.
Also, as $\pi_*\mcO_X=\mcOP\oplus \mcOP(-s)$, we have 
$c_1(\pi_*\mcO_X)=[-s]$.
Hence, $c_1(\pi_*\mcO_X(D))=\mcO_X$. The two dimensional
space of global sections of $\pi_*\mcO_X(D)$ gives the
following exact sequence with cokernel $T$:
$$0\ra\mcOP\oplus\mcOP\ra \pi_*\mcO_X(D)\ra T\ra 0\,.$$
Now $T$ is a torsion sheaf with $c_1(T)=0$. Hence,
$\text{supp}\,T$ is a closed subset of $\mbb{P}^2$ with
codimension at least two in $\mbb{P}^2$. That is, we
have two locally free sheaves $\mcOP\oplus\mcOP$ and $ \pi_*\mcO_X(D)$ 
which are isomorphic on an open subset of $\mbb{P}^2$ whose
complement has codimension $\geq 2$. Thus, by \cite[Proposition 1.6
  (iii)]{RH}
$\pi_*\mcO_X(D)\simeq \mcOP\oplus\mcOP$.
This shows that $\mcO_X(D)$ is an Ulrich bundle for $\pi$. \emph{Hence (2)
implies (1)}.

\paragraph{\underline{\textbf{Part 3 - $(1)\implies (2)$}}}
Assume (1), i.e. let $L$ be an Ulrich line bundle on $X$. Then,
by Lemma \ref{L not ample}, $(L)^2=0$ and by Remark \ref{glob gtd}, 
$L$ is globally generated. By Bertini's theorem \cite[Corollary II.10.9]{RH1},
we can assume that $L=\mcO_X(D)$ for a smooth curve
$D\xhra{j}X$. Consider the short exact sequence
$$0\ra\mcO_X\ra\mcO_X(D)\ra j_*\mcO_X(D)|_{D}\ra 0\, ,$$
and push-forward to $\mbb{P}^2$ by $\pi$ to get
$$0\ra \mcOP\oplus\mcOP(-s)\ra \mcOP\oplus\mcOP\ra \pi_*j_*\mcO_X(D)|_{D}\ra 0\,.$$
Considering the long exact sequence of cohomology associated to the 
above short exact sequence, we get the following.
\begin{enumerate}
 \item[(a)] $h^0(D,\mcO_X(D)|_{D})=1$. Combining with the fact that 
 $$\trm{deg}\, \mcO_X(D)|_{D}=(D)^2=(L)^2=0\,$$ we get $\mcO_X(D)|_{D}=\mcO_{D}$.
 \item[(b)] Next $H^1(D,\mcO_X(D)|_{D})\simeq H^2(\mbb{P}^2, \mcOP(-s))$.
 By part (a) and Serre duality, we get 
 $H^1(D,\mcO_{D})\simeq H^0(\mbb{P}^2,\mcOP(s-3))^{\vee}$. Thereby,
 the genus of $D$ is
 $g(D)=\,h^0(\mbb{P}^2,\mcOP(s-3))=\frac{(s-1)(s-2)}{2}$.
\end{enumerate}
Now if $\sigma(D)=D$, then consider $C=D/\langle\sigma\rangle$, the quotient of
$D$ by the action of $\sigma$. We then have $\pi^*\mcOP(C)=\mcO_X(D)$ and 
by projection formula,
$$\pi_*\mcO_X(D)=\pi_*\pi^*\mcOP(C)=(\mcOP\oplus\mcOP(-s))\otimes\mcOP(C)$$
which is not trivial. This contradicts the fact that $\mcO_X(D)$ is
an Ulrich bundle.

Thus $\sigma(D)\neq D$. Setting $C:=\pi(D)$, we get $\pi^{-1}(C)=D+\sigma(D)$. 
Let $m\in \mathbb{Z}^+$ be such that $C\in |\mcOP(m)|$. 
Then $D+\sigma(D)\in |\pi^*\mcOP(m)|$, and hence 
$$K_X\cdot (D+\sigma(D))=\pi^*\mcOP(s-3)\cdot \pi^*\mcOP(m)=2m(s-3)\ \text{giving}$$
$$K_X\cdot D= m(s-3)\,.$$
The adjunction formula for $D\subset X$, i.e.
$2g(D)-2=K_X\cdot D+ D^2$ shows that
$$s(s-3)=m(s-3)\ \text{implying that } m=s\,.$$
Hence $C$ is a plane curve of degree $s$ and
$(D+\sigma(D))^2=2(C)^2=2s^2$. That is,
$$(D)^2+(\sigma(D))^2+2D\cdot\sigma(D)=2s^2\,.$$
Note that $(D)^2=(\sigma(D))^2=0$ since $D$ and $\sigma(D)$ are isomorphic curves 
in $X$. Thus $D\cdot\sigma(D)=s^2$. Further, 
$$2(2s^2)=2(C\cdot B)=\pi^*C\cdot \pi^*B=(D+\sigma(D))\cdot 2R\,.$$
Again by the symmetry of $D$ and $\sigma(D)$, we obtain
$D\cdot R=\sigma(D)\cdot R=s^2$. \emph{This
 shows that (1) $\implies$ (2)}. We show that (3) $\implies$ (2) in the 
 next section.
\end{proof}
As a consequence of the theorem, the following remark shows that 
Ulrich line bundles appear in 
pairs.
\begin{rmk}
Let $\mcO_X(D)$ be an Ulrich line bundle on $X$, where $D\subset X$
is a smooth curve. Then $\sigma(D)\subset X$ is another smooth curve
satisfying condition two of Theorem \ref{thm:neccsuff}.
Note that $\mcO_X(\sigma(D))\neq\mcO_X(D)$. For, 
if $\mcO_X(\sigma(D))=\mcO_X(D)$, then $\mcO_X(\sigma(D))\cdot\mcO_X(D)=0$.
But, we know that
$\mcO_X(\sigma(D))\cdot\mcO_X(D)=\sigma(D)\cdot D=s^2$. Hence,
$\mcO_X(\sigma(D))$ is another Ulrich line bundle on $X$.
\end{rmk}
The following remark discusses the morphism from $X$ to the 
projective line induced by an Ulrich line bundle and its fibres.
\begin{rmk}\label{oneparameter}
 Consider an Ulrich line bundle $L$ on a double plane $X$ branched
 over a smooth degree $2s$ curve $B$. Since $L$ is
 globally generated and $h^0(X,L)=2$, $L$ gives a morphism
 $$\phi:X\ra \mbb{P}^1\,.$$
 Note that the fibres of $\phi$ are curves $D\subset X$ such that
 $\mcO_X(D)=L$. 
 Hence, there is a one parameter 
 family of curves in $X$ represented by the linear equivalence class
 $[D]$, whose general element satisfies condition (2) of
 Theorem \ref{thm:neccsuff}. Thereby, we also have 
 a one parameter family of curves in $\mbb{P}^2$ satisfying condition (3)
 of the theorem, which is the image 
 of the one parameter family of curves $[D]$ in $X$.
\end{rmk}

\section{Even order Tangent curves to the branch curve}\label{tangent}
Our objective in this section is to prove that the condition (3) of 
Theorem \ref{thm:neccsuff} implies condition (2).
As earlier, suppose that $\pi:X\ra\mbb{P}^2$ is a degree two morphism of
smooth surfaces. Let $B\in |\mcOP(2s)|_{sm}$ be the smooth even degree branch 
curve of $\pi$.  Let $C\subset\mbb{P}^2$ be a curve satisfying the hypotheses
of part (3) of the theorem. That is, $C$ is a smooth degree $s$ curve
in $\mbb{P}^2$ which is a tangent to $B$ of even 
order at every point of $C\cap B$. Let $B\cap C=2\mfk{d}$, where
$\mfk{d}$ is an effective divisor of degree $s^2$ on $B$ (therefore on $C$
as well) such that $\text{Supp}\,\mfk{d}=\{P_1,P_2,\cdots, P_r\}$.
Let us also use the same notation to denote the preimages
of these points in $X$. We now prove the local reducibility of 
$\pi^{-1}(C)$.
\begin{lemma}\label{local reducible}
 The inverse image curve $\pi^{-1}(C)$ is locally reducible at each of the points
 $P_1$,$P_2,\cdots,P_{r}$ in $X$.
\end{lemma}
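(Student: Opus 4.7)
\medskip
\noindent\textbf{Proof plan.} The statement is local at each $P_i \in C \cap B$, so I will work in the analytic (or formal) local ring at $P_i$ and show that the equation defining $\pi^{-1}(C)$ factors into two distinct branches there. The key ingredient is the hypothesis that the intersection multiplicity $(B \cdot C)_{P_i} = 2k_i$ is even.

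First I would choose local analytic coordinates $(x,y)$ on $\mbb{P}^2$ centered at $P_i$ so that the smooth curve $B$ is cut out by $y = 0$; this is possible by the implicit function theorem. Since $C$ is also smooth at $P_i$ and meets $B$ with intersection multiplicity $2k_i$, the local equation of $C$ can be written, after a further coordinate change in $x$ if desired, as $y - x^{2k_i} u(x) = 0$ where $u(x)$ is a unit in the local ring. (If one prefers to avoid adjusting $x$, one simply keeps the factor $u$; the argument below works as stated.) The double cover $\pi : X \to \mbb{P}^2$ is locally given by $z^2 = y$ in coordinates $(x, z)$ on $X$, so $P_i \in R$ corresponds to the origin.

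Next I would pull back the defining equation of $C$ by $\pi$: locally $\pi^{-1}(C)$ is cut out by
\[
z^2 - x^{2k_i}\, u(x) = 0.
\]
Since $u(x)$ is a unit, it admits an analytic (equivalently, formal) square root $v(x)$ with $v(0) \neq 0$, because the branch $\sqrt{\cdot}$ is defined near $u(0) \neq 0$. Substituting, the equation factors as
\[
\bigl(z - x^{k_i} v(x)\bigr)\bigl(z + x^{k_i} v(x)\bigr) = 0,
\]
and the two factors are distinct and irreducible in the local ring. This exhibits two analytic branches of $\pi^{-1}(C)$ meeting at $P_i$, so $\pi^{-1}(C)$ is locally reducible there.

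The only subtlety I anticipate is the insistence on being careful about the category in which the factorization lives: the factorization $u(x) = v(x)^2$ does not take place in the Zariski local ring but does in its Henselization or analytic completion, which is what is needed for a statement about local branches in the smooth complex surface $X$. With that understood, the argument is essentially the one-line observation that evenness of the tangency order is exactly what makes the pulled-back equation $z^2 = y$-branch split into two factors, so I do not expect a serious obstacle; the geometric content of the lemma is the evenness hypothesis itself.
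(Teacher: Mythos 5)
Your proof is correct and follows essentially the same route as the paper: choose local coordinates in which the branch curve $B$ is a coordinate axis, write the double cover as $z^2=y$, and observe that the evenness of $(B\cdot C)_{P_i}$ makes the pulled-back equation a difference of squares, hence reducible. The only cosmetic difference is that the paper absorbs the unit by a preliminary coordinate change (normalizing $C$ first and then $B$), whereas you extract a square root of the unit after pulling back; both are valid in the formal or analytic local ring, which is the right setting here.
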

\begin{proof}
 Consider a local chart in $\mbb{P}^2$ around $P_i$ which maps $P_i$ to the origin 
 $(0,0)\in\mbb{C}^2$. Since both $C$ and $B$ in this affine patch
 pass through $(0,0)$, their local equations in the complete local ring
 $\mbb{C}[[x,y]]$
 contain no constant terms. By a local
 change of coordinates, set the local equation of $C$ as $x'=0$. 
 Let $l\in\mbb{N}$ be such that $(B\cdot C)_{P_i}=2l$, i.e. 
 $B$ and $C$ are tangent of order $2l$ to each other at $P_i$.
 Then in
 the coordinates $x', y$, the local equation of $B$ in $\mbb{C}[[x',y]]$ 
 can be written as
 $$x'(a+f)+y^{2l}(b+g)=0\,,$$
 where $a,b\in\mbb{C}^*$ and $f,g\in(x',y)$.
 Thus the local equation of $B$ is
 $$u_1x'+u_2y^{2l}=0\,,$$
 where we note that $u_1=(a+f),u_2=(b+g)$  are units in $\mbb{C}[[x',y]]$. 
 Since $\frac{u_2}{u_1}$ and $(\frac{u_2}{u_1})^{\frac{1}{2l}}$ are 
 units in $\mbb{C}[[x',y]]$, we change the local
 coordinates to make the local equation of $B$
 $$x'+(y')^{2l}=0\,.$$
 Finally set $x_1=x'+(y')^{2l}$. Then, in the local
 coordinates $x_1,y'$, the local equation of $B$ is
 $(x_1=0)$ and the local equation of $C$ is $(x_1-(y')^{2l}=0)$.
 
 Let $u,v$ be the local coordinates around the point
 $P_i$ in $X$. Note  that, locally around $P_i$, 
 the double cover morphism $\pi:X\ra \mbb{P}^2$ is given by 
 the following map of complete local rings :
 $$\mbb{C}[[x_1,y']]\ra\mbb{C}[[u,v]] \quad x_1\mapsto u^2, y'\mapsto v\,.$$
 Then the local equation $(x_1-(y')^{2l}=0)$
 of the curve $C$ in $\mbb{P}^2$ pulls back to
 the local curve $(u^2-v^{2l}=0)=((u-v^l)(u+v^l)=0)$ in $X$, which is reducible. 
 This shows that $\pi^{-1}(C)$ is locally reducible at $P_i$. 
\end{proof}

\begin{rmk}\label{locintmult}
  Suppose that $D_1$ and $D_2$ are curves in $X$ whose local equations
  in the completion $\mbb{C}[[u,v]]$ of the stalk $\mcO_{X,P_i}$ at
  $P_i$ are $(u-v^l=0)$ and $(u+v^l=0)$. Then the local intersection
  multiplicity of $D_1$ and $D_2$ at $P_i$ is
$$(D_1\cdot D_2)_{P_i}=\emph{dim}_{\mbb{C}}\frac{\mbb{C}[[u,v]]}{(u-v^l,u+v^l)}=\emph{dim}_{\mbb{C}}\frac{\mbb{C}[[v]]}{(v^l)}=l\,.$$
Note that the local equation of the ramified curve $R\subset X$ 
in the completion $\mbb{C}[[u,v]]$ of the stalk $\mcO_{X,P_i}$ at
  $P_i$ is $(u=0)$. Hence, for $j=1,2$ the local intersection multiplicity
$$(D_j\cdot R)_{P_i}=\emph{dim}_{\mbb{C}}\frac{\mbb{C}[[u,v]]}{(u\pm v^l,u)}=\emph{dim}_{\mbb{C}}\frac{\mbb{C}[[v]]}{(v^l)}=l\,.$$
  
\end{rmk}
Lemma \ref{local reducible} shows that, at each of the points $P_i$,
the inverse image curve $\pi^{-1}(C)$ in fact locally splits up into two components. 
We next wish to show that $\pi^{-1}(C)$ is globally reducible in $X$. 
In order to do that, we first prove the following general lemma.
\begin{lemma}\label{genus loops map trivially}
  Let $C_1$ and $C_2$ be two smooth curves in $\mbb{P}^2$. Then the
  image of the morphism of fundamental groups:
$$\pi_1(C_1\setminus C_1\cap C_2)\ra \pi_1(\mbb{P}^2\setminus C_2)$$
is generated only by the images of the loops in
$C_1\setminus C_1\cap C_2$ around the punctures.
\end{lemma}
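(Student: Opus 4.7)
The plan is to reduce the claim to showing that every genus loop $\gamma \subset C_1 \setminus F$ (where $F := C_1 \cap C_2$) maps into the subgroup of $\pi_1(\mathbb{P}^2 \setminus C_2)$ generated by the images $\iota_*(\mu_i)$ of the puncture loops --- the puncture loops themselves trivially lie there. I would produce a topological $2$-disk $D \subset \mathbb{P}^2$ with $\partial D = \gamma$, $D \cap C_2 \subset F$, and such that $D$ coincides locally with $C_1$ near each point of $D \cap F$; the standard boundary-push-off argument then writes $\iota_*(\gamma) = \prod_j \tau_j \lambda_j \tau_j^{-1}$, where each $\lambda_j$ is a small loop in $D$ around a point of $D \cap F$ and each $\tau_j$ is a path in $D$, so that after the local identification of $D$ with $C_1$ near $D \cap F$, each $\lambda_j$ coincides with some $\iota_*(\mu_{i_j})$.

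The existence of such a disk $D$ rests on the topological fact $\pi_1(\mathbb{P}^2 \setminus (C_2 \setminus F)) = 1$, which I would prove by Van Kampen. At each $P_i \in F$, adjoining $P_i$ back to $\mathbb{P}^2 \setminus C_2$ kills the local meridian of $C_2$ at $P_i$ (in a small $4$-ball about $P_i$ the meridian can be coned to the restored point $P_i$), and since $C_2$ is smooth and irreducible all its meridians are mutually conjugate in $\pi_1(\mathbb{P}^2 \setminus C_2)$, so normally killing one meridian kills every meridian, and hence the entire group (as meridians normally generate, with quotient $\pi_1(\mathbb{P}^2) = 1$). Thus $\gamma \subset \mathbb{P}^2 \setminus (C_2 \setminus F)$ bounds a disk $D$ there, and a further local surgery near each $P_i \in D \cap F$ --- replacing a small sub-disk of $D$ by the corresponding small disk of $C_1$ about $P_i$, which is possible because both sub-disks are $2$-disks with isotopic unknotted boundary circles in a small $3$-sphere about $P_i$, and $C_1$ meets $C_2$ only in $F$ --- achieves the desired local agreement of $D$ with $C_1$.

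Combining the two steps, $\iota_*(\gamma)$ is a product of conjugates of the $\iota_*(\mu_i)$, and hence lies in the subgroup generated by the puncture-loop images. The main obstacle will be the Van Kampen calculation showing $\pi_1(\mathbb{P}^2 \setminus (C_2 \setminus F)) = 1$ together with the local surgery that makes $D$ coincide with $C_1$ near its intersections with $F$; without this surgery the small loops $\lambda_j$ in $D$ at $P_i$ would just be ambient meridians of $C_2$ (which wrap once around $C_2$), rather than the puncture loops of $C_1$ (which wrap $(C_1 \cdot C_2)_{P_i}$ times), and the identification with the $\iota_*(\mu_i)$ would fail.
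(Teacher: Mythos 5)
Your reduction to genus loops, the computation $\pi_1\bigl(\mathbb{P}^2\setminus(C_2\setminus F)\bigr)=1$ via Van Kampen, and the boundary--push-off identity $\iota_*(\gamma)=\prod_j\tau_j\lambda_j\tau_j^{-1}$ are all sound. The step that fails is the local surgery, i.e.\ exactly the step you flagged as the main obstacle. For the surgery to preserve the condition $D\cap C_2\subset F$, the isotopy between $D\cap S^3_\epsilon(P_i)$ and $C_1\cap S^3_\epsilon(P_i)$ must take place in $S^3_\epsilon(P_i)\setminus C_2$, not merely in $S^3_\epsilon(P_i)$; and in $S^3_\epsilon(P_i)\setminus C_2$ these two unknots are in general not even homologous, since their classes in $H_1(S^3_\epsilon\setminus C_2)\cong\mathbb{Z}$ are their linking numbers with $C_2\cap S^3_\epsilon$, namely the local intersection number $k_j$ of $D$ with $C_2$ at $P_i$ for the first and $m_i=(C_1\cdot C_2)_{P_i}$ for the second. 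The integers $k_j$ are not at your disposal: writing $\ell=\deg C_2$, the group $\pi_1(\mathbb{P}^2\setminus C_2)\cong\mathbb{Z}/\ell$ is cyclic on the meridian $\delta$, and any admissible disk gives $\iota_*(\gamma)=\bigl(\sum_j k_j\bigr)\delta$; so being able to choose a disk with every $k_j$ a multiple of the corresponding $m_{i}$ is \emph{equivalent} to the conclusion $\iota_*(\gamma)\in\langle m_1\delta,\dots,m_r\delta\rangle$. The argument is therefore circular at that point (also note that before surgery the $\lambda_j$ wrap $k_j$ times around $C_2$, not once).

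This gap cannot be repaired, because the statement is false once $C_1$ has positive genus. Take $C_1=E$ a smooth plane cubic, $L_0\in\mathrm{Pic}^0(E)[2]$ nontrivial, and $\mathfrak{d}=P_1+P_2+P_3$ a general member of $|\mathcal{O}_E(1)\otimes L_0|$. Since $H^0(\mathbb{P}^2,\mathcal{O}(2))\xrightarrow{\ \sim\ }H^0(E,\mathcal{O}_E(2))$ and $2\mathfrak{d}\in|\mathcal{O}_E(2)|$, there is a unique conic $C_2$ with $C_2\cap E=2\mathfrak{d}$, and for general $\mathfrak{d}$ it is smooth; so every puncture loop of $E\setminus\{P_i\}$ maps to $2\delta=0$ in $\pi_1(\mathbb{P}^2\setminus C_2)\cong\mathbb{Z}/2$. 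Yet the induced double cover of $E\setminus\{P_i\}$ is the restriction of $\{t^2=f_{C_2}\}$, which splits if and only if $f_{C_2}|_E$ is a square in $H^0(\mathcal{O}_E(1))$, i.e.\ iff $\mathcal{O}_E(\mathfrak{d})\cong\mathcal{O}_E(1)$, i.e.\ iff $L_0$ is trivial; here it is connected, so $\pi_1(E\setminus\{P_i\})\to\mathbb{Z}/2$ is surjective and the image is strictly larger than the subgroup generated by the puncture loops. The same $\ell$-torsion obstruction is what the paper's proof silently discards when it replaces $C_1$ by a transverse perturbation $C_1'$: for $C_1'$ each puncture loop is a single meridian and the claim is vacuously true, but this gives no control over the genus loops of the original tangential $C_1$. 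The lemma does hold, and your disk argument can be completed, exactly when the relevant torsion bundle $\mathcal{O}_{C_1}(\mathfrak{d})\otimes\mathcal{O}_{C_1}(-\deg C_1\cdot H)^{\vee}\otimes\cdots$ vanishes --- in particular whenever $C_1$ is rational, which covers the cases $s=1,2$ of the Corollaries --- so any correct proof must use such a hypothesis rather than pure topology.
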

\begin{proof}
Let $C_1\in |\mcOP(d)|_{sm}$ and $C_2\in|\mcOP(l)|_{sm}$.
The set
$$\mcD=\{[C]\in |\mcOP(d)|\,:\, C\text{ does not intersect }C_2\text{ transversally}\}\,$$
is a divisor in the projective space $|\mcOP(d)|$. Then the complement $\mcD^c$ of 
$\mcD$ is an open subset of $|\mcOP(d)|$ consisting of curves which
intersect $C_2$ transversally.

Let
$\mcC$ denote the incidence variety 
$$\mcC:=\{(x,C): C\in|\mcOP(d)|_{sm}\text{ and } x\in C \}\subset\mbb{P}^2\times |\mcOP(d)|_{sm}\,, $$
and $p:\mcC\ra |\mcOP(d)|_{sm}$ the projection. 
The fibre of $p$
over a point $[C]\in |\mcOP(d)|_{sm}$ is $p^{-1}([C])\simeq C$. By the Ehresmann's 
fibration theorem \cite[Theorem 13.1.3]{DA},
$p:\mcC\ra |\mcOP(d)|_{sm}$ is a locally trivial topological fibration.
Consequently, by denoting the open set
$p^{-1}(\mcD^{c}\cap |\mcOP(d)|_{sm})\subset\mcC$ by $\mcC^*$, we have a relative 
locally trivial fibration:
\begin{displaymath}
 \xymatrix{\mcC_{C_2}\ar@{^{(}->}[r]\ar[dr] & \mcC^*\ar[d]^p\\
	      & \mcD^c\cap |\mcOP(d)|_{sm} }
\end{displaymath}
where $\mcC_{C_2}:=\{(x,C\cap C_2)\,|\,C\in \mcD^c\cap |\mcOP(d)|_{sm}, x\in C\cap C_2\}$ 
is the relative family of divisors. 

Consider a connected and smooth 
(it is enough to assume smoothness in $\mathcal{C}^{\infty}$ sense) curve $T \subset |\mcOP(d)|$ 
and a fixed closed point $0\in T$ with the following property.
There is a family of smooth curves $\{V_t\}$  
of degree $d$ for $t\in T\setminus\{0\}$ 
which degenerates to a curve $V_0\in\mcD^c$ where $V_0$ is a degree $d$ curve in $\mbb{P}^2$ which is a union of
$d$ distinct lines $L_1,L_2,\cdots ,L_d$ which 
are concurrent at a point $O\notin C_2$.
The condition that $V_0\in\mcD^c$ implies that each $L_i$ 
intersects $C_2$ transversally at all points of $L_i\cap C_2$. 
Since $V_0\in \mcD^c$, there is a non-empty
open subset of $T\setminus\{0\}$ such that the corresponding curves
$V_t\in\mcD^c$.

By an infinitesimal deformation, the curve $C_1$ 
can be deformed to a curve $C_1'\in\mcD^c\cap |\mcOP(d)|_{sm}$.
In order to prove the claim of the lemma, it is enough to 
prove that the image of the morphism,
$$\pi_1(C_1'\setminus C_1'\cap C_2)\ra \pi_1(\mbb{P}^2\setminus C_2)$$
is generated only by the images
of the loops in $C_1'\setminus C_1'\cap C_2$ around the punctures. 

 Choose a $t_0\in T$ close to $0$ such that $V_{t_0}\in \mcD^c\cap |\mcOP(d)|_{sm}$.
 Since $C_1', V_{t_0}\in \mcD^c\cap|\mcOP(d)|_{sm}$, there is a path in $\mcD^c\cap|\mcOP(d)|_{sm}$
joining $C_1'$ and $V_{t_0}$. Note that, as $\mcC_{C_2}\hra \mcC^*$ is a relative locally trivial 
fibration over $\mcD^c\cap|\mcOP(d)|_{sm}$, $C_1'\setminus (C_1'\cap C_2)$ and
$V_{t_0}\setminus(V_{t_0}\cap C_2)$ are homotopy equivalent. Thus, it is
enough to prove that the image of 
$$\pi_1(V_{t_0}\setminus (V_{t_0}\cap C_2))\rightarrow \pi_1(\mbb{P}^2\setminus C_2)$$
is generated only by the images
of the loops in $V_{t_0}\setminus (V_{t_0}\cap C_2)$ around the punctures. 

We do this by considering the Milnor fibration \cite{JM} around $0\in T$. In
particular, choose an $\epsilon>0$ such that the open ball $B_{\epsilon}$ and
it's closure $\overline{B_{\epsilon}}$ in $\mbb{P}^2$ satisfy
$B_{\epsilon}\subset \overline{B_{\epsilon}}\subset \mbb{P}^2\setminus C_2$ and $V_{t_0}\cap B_{\epsilon}\neq\emptyset$. 
If $B_{\epsilon}^c$ denotes the complement
of the ball $B_{\epsilon}$ in $\mbb{P}^2$, then the set 
$(V_{t_0}\setminus V_{t_0}\cap C_2)\cap B_{\epsilon}^c$ is 
diffeomorphic to $\bigcup_{i=1}^d (L_i\setminus(L_i\cap C_2))\cap B_{\epsilon}^c$.
Now consider the covering by closed sets 
$$V_{t_0}\setminus(V_{t_0}\cap  C_2)\simeq (V_{t_0}\setminus 
V_{t_0}\cap C_2)\cap \overline{B_{\epsilon}}\cup\,
\bigcup_{i=1}^d (L_i\setminus(L_i\cap C_2))\cap B_{\epsilon}^c\,.$$
Denote $A:=(V_{t_0}\setminus 
V_{t_0}\cap C_2)\cap \overline{B_{\epsilon}}$ and
$A_i:=(L_i\setminus(L_i\cap C_2))\cap B_{\epsilon}^c$ for $i=1,2,\cdots,d$.
Each $A\cap A_i$ is path connected, since it is
the boundary of a disc. Hence, we apply Van Kampen theorem \cite[$\mathcal{x}\,$1.2]{Hat} 
(using closed sets)
to
obtain the fundamental group $A\cup A_1$ at a basepoint on
$A\cap A_1$:
$$\pi_1(A\cup A_1)\simeq \pi_1(A)*_{\pi_1(A\cap A_1)}\pi_1(A_1)\,.$$
Since $A\cup A_1$ is path-connected, the fundamental group
is independent of the choice of basepoint. 
Next, $(A\cup A_1)\cap A_2=A\cap A_2$ which is again path connected.
By the Van Kampen theorem, 
the fundamental group of $A\cup A_1\cup A_2$
based at a point of $A\cap A_2$ is
\begin{multline}
\pi_1(A\cup A_1\cup A_2)\simeq \pi_1(A\cup A_1)*_{\pi_1(A\cap A_2)}\pi_1(A_2)\\
\simeq \pi_1(A)*_{\pi_1(A\cap A_1)}\pi_1(A_1)*_{\pi_1(A\cap A_2)}\pi_1(A_2)\,. \nonumber
\end{multline}
Inductively, we get that $\pi_1(V_{t_0}\setminus (V_{t_0}\cap C_2))$ is isomorphic to 
$$\pi_1(A)*_{\pi_1(A\cap A_1)}
\pi_1(A_1)*_{\pi_1(A\cap A_2)}\pi_1(A_2)\cdots *_{\pi_1(A\cap A_d)}\pi_1(A_d)\,.$$
Note that by the universal property of free products, and by the 
definition of amalgamation products, we get the following commutative diagram.
\begin{displaymath}
 \xymatrix{\pi_1(A)*\pi_1(A_1)*\cdots\pi_1(A_d)\ar[r]\ar@{->>}[d] & \pi_1(\mbb{P}^2\setminus C_2)\\
	  \pi_1(V_{t_0}\setminus (V_{t_0}\cap C_2))\ar[ur] &}
\end{displaymath}
Thus it is enough to show that the image of the morphism 
$$\pi_1(A)*\pi_1(A_1)*\cdots\pi_1(A_d)\ra \pi_1(\mbb{P}^2\setminus C_2)$$
is generated by the images of loops in $V_{t_0}\setminus (V_{t_0}\cap C_2)$ around the punctures.
Note that, $A=(V_{t_0}\setminus V_{t_0}\cap C_2)\cap 
\overline{B_{\epsilon}}\subset\overline{B_{\epsilon}}$ 
is contractible in $\mbb{P}^2\setminus C_2$. 
Further, each $A_i=(L_i\setminus(L_i\cap C_2))\cap B_{\epsilon}^c$
is homotopically equivalent to a closed disc with $l$ punctures
whose fundamental group is generated by loops $\gamma_1,\gamma_2,\cdots,\gamma_l$ around the $l$
punctures and the boundary loop $\gamma_1*\gamma_2*\cdots\gamma_l$.
Each of the loops $\gamma_i$ maps to a loop $\delta_i$ around $C_2$ in $\mbb{P}^2\setminus C_2$,
and each such $\delta_i$ is homotopic to the generator of $\pi_1(\mbb{P}^2\setminus C_2) \simeq \mbb{Z}/l\mbb{Z}$.
Thus the image
of the boundary loop $\gamma_1*\gamma_2*\cdots*\gamma_l$ in $\pi_1(\mbb{P}^2\setminus C_2)$  
is $l\times(\text{generator})$ which is the trivial element of $\pi_1(\mbb{P}^2\setminus C_2)\simeq \mbb{Z}/l\mbb{Z}$. 

Thereby we get that the image of 
$$\pi_1(V_{t_0}\setminus (V_{t_0}\cap C_2))\rightarrow \pi_1(\mbb{P}^2\setminus C_2)$$
is generated only by the images
of the loops in $V_{t_0}\setminus (V_{t_0}\cap C_2)$ around the punctures. 
\end{proof}
Using notations as earlier, we prove the following theorem.
\begin{thm}\label{global reducibility}
 Let $C\subset\mbb{P}^2$ be a smooth degree $s$ curve 
 which is an even order tangent to the branch curve $B$  
at every point of $C\cap B$ . 
Then $\pi^{-1}(C)$ is a reducible curve $D_1+D_2$. 
 Further, each component $D_i$ is isomorphic to $C$, $D_2=\sigma(D_1)$
 and $D_1\cdot D_2=s^2=D_i\cdot R$, for $i=1,2$.
\end{thm}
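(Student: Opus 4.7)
The plan is to study the restricted cover $\pi^{-1}(C)\to C$ and show that it becomes a \emph{trivial} étale double cover after removing the tangency points, which forces global reducibility. First I set $C^0:=C\setminus(C\cap B)=C\setminus\{P_1,\ldots,P_r\}$; then $\pi\colon \pi^{-1}(C^0)\to C^0$ is an étale degree-two cover, classified by pulling back the index-two subgroup of $\pi_1(\mbb{P}^2\setminus B)\simeq \mbb{Z}/2s\mbb{Z}$ (which defines $X\setminus R\to \mbb{P}^2\setminus B$) along the composition $\pi_1(C^0)\to \pi_1(\mbb{P}^2\setminus B)$.

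To see that this pullback cover is trivial, I invoke Lemma \ref{genus loops map trivially} with $C_1=C$ and $C_2=B$: the image of $\pi_1(C^0)\to \pi_1(\mbb{P}^2\setminus B)$ is generated by the images of small loops in $C^0$ around the punctures $P_i$. Such a loop is freely homotopic in $\mbb{P}^2\setminus B$ to $(B\cdot C)_{P_i}=2l_i$ copies of a meridian of $B$, so its image in $\mbb{Z}/2s\mbb{Z}$ is the \emph{even} residue $2l_i$. Since the index-two subgroup of $\mbb{Z}/2s\mbb{Z}$ is precisely the even residues, the image of $\pi_1(C^0)$ lies in it, and the pullback cover splits. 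Consequently, $\pi^{-1}(C^0)$ has two connected components $D_1^0$ and $D_2^0$, each mapping isomorphically onto $C^0$.

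Next I take the closures $D_i:=\overline{D_i^0}\subset X$. By Lemma \ref{local reducible}, at each $P_i$ the two local components of $\pi^{-1}(C)$ have equations $u\pm v^l=0$ in suitable local coordinates, each of which is manifestly smooth; $D_1$ and $D_2$ pick up one local branch apiece, so they are smooth globally and $\pi^{-1}(C)=D_1+D_2$ as divisors. The morphism $\pi|_{D_i}\colon D_i\to C$ is then a birational morphism between smooth projective curves, hence an isomorphism. Since the covering involution $\sigma$ freely swaps the two sheets of $\pi$ over $C^0$, it interchanges $D_1^0$ and $D_2^0$, and passing to closures gives $\sigma(D_1)=D_2$.

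Finally, $D_1$ and $D_2$ can only meet above $\mathrm{Supp}\,\mfk{d}$, and Remark \ref{locintmult} gives $(D_1\cdot D_2)_{P_i}=l_i$ with $2l_i=(B\cdot C)_{P_i}$, so
\[
D_1\cdot D_2 \;=\; \sum_{i=1}^{r} l_i \;=\; \tfrac{1}{2}(B\cdot C) \;=\; \tfrac{1}{2}\cdot 2s^2 \;=\; s^2
\]
by B\'ezout. The main obstacle is the monodromy step: the conclusion that the pullback cover is trivial rests crucially on Lemma \ref{genus loops map trivially} combined with the evenness of each contact order $(B\cdot C)_{P_i}$. Without the parity hypothesis the image of $\pi_1(C^0)$ need not land in the index-two subgroup, and $\pi^{-1}(C)$ could very well remain irreducible.
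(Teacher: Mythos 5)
Your proposal is correct and follows essentially the same route as the paper: both arguments reduce global reducibility to showing that the \'etale double cover over $C\setminus(C\cap B)$ is trivial, both invoke Lemma \ref{genus loops map trivially} to see that the image of $\pi_1(C\setminus C\cap B)$ in $\pi_1(\mbb{P}^2\setminus B)$ is generated by puncture loops whose classes are even multiples of the meridian, and both compute $D_1\cdot D_2=s^2$ from Remark \ref{locintmult} via B\'ezout. Your phrasing in terms of the index-two subgroup of $\mbb{Z}/2s\mbb{Z}$ is just a restatement of the paper's lifting criterion to $\pi_1(X\setminus R)$, and your extra care about each $D_i$ picking up exactly one smooth local branch at each $P_i$ is a welcome but minor elaboration of what the paper does implicitly.
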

\begin{proof}
 Recall that $R\subset X$ denotes the ramified curve of the double cover ${\pi:X\ra\mbb{P}^2}$.
Then ${\pi:X\setminus R\ra \mbb{P}^2\setminus B}$ is a genuine two sheeted covering
map. Hence we have the following morphisms between the fundamental groups.
\begin{displaymath}
 \xymatrix{         &    			&  \pi_1(C\setminus C\cap B)\ar[d]^{\Phi} & 			& \\
	    1\ar[r] & \pi_1(X\setminus R)\ar[r] & \pi_1(\mbb{P}^2\setminus B) \ar[r] &\mbb{Z}/2\mbb{Z}\ar[r] & 0}
\end{displaymath}
Suppose the above map of fundamental groups $\Phi:\pi_1(C\setminus C\cap B)\ra 
\pi_1(\mbb{P}^2\setminus B)$ lifts to $\pi_1(X\setminus R)$, then the
inclusion $C\setminus C\cap B\hra \mbb{P}^2\setminus B$ lifts to
$X\setminus R$. This gives a section of 
$\pi|_{\pi^{-1}(C\setminus C\cap B)}: \pi^{-1}(C\setminus C\cap B) \ra C\setminus C\cap B$ 
 which shows that $\pi^{-1}(C\setminus C\cap B)$ is
disconnected and thereby $\pi^{-1}(C)$ is reducible. So it is
enough to prove that the morphism $\Phi$ lifts to $\pi_1(X\setminus R$),
which is the same as 
showing that the image in $\mbb{Z}/2\mbb{Z}$ of $\Phi(\pi_1(C\setminus C\cap B))$ 
is the trivial element.

 By Lemma \ref{genus loops map trivially}, $\Phi(\pi_1(C\setminus C\cap B))$
 is generated only by loops in $C\setminus C\cap B$ around the punctures (i.e. points of $C\cap B$).
 Let $x\in C\cap B$ and let $\gamma_x$ be a loop in $C\setminus C\cap B$ around $x$. Then under $\Phi$,
 $$ \gamma_x\mapsto (C\cdot B)_x\delta_x\,$$
 where $\delta_x$ is a small loop around $B$ at $x$. But such a $\delta_x$ can be
 considered a 
 generator of $\pi_1(\mbb{P}^2\setminus B)$ and $(C\cdot B)_x$ is even by 
 hypothesis. Hence $\gamma_x$ maps to an even multiple of the generator
 of $\pi_1(\mbb{P}^2\setminus B)$, whose further image in $\mbb{Z}/2\mbb{Z}$ 
 is trivial. This shows that $\pi^{-1}(C)$ is reducible and has the form $D_1+D_2$.
 
 Now $\pi|_{D_1+D_2}:D_1+D_2\ra C$ is a degree two finite map. Since 
 $C\cap B=2\mfk{d}$ where $\mfk{d}$ is a divisor on $B$ of
 degree $s^2$, the map $\pi|_{D_1+D_2}$ is ramified exactly along the inverse
 image of $\mfk{d}$. Away from the ramification points $\pi|_{D_1+D_2}$ is
 a covering map. Hence each $D_i$ is isomorphic to $C$.
 Thus, $D_2=\sigma(D_1)$ and $\pi^{-1}(C)=D_1+\sigma(D_1)$. From Remark 
 \ref{locintmult}, we see that $D_1\cdot D_2=\Sigma (D_1\cdot D_2)_{P_i}=s^2$ where
 the summation is taken over points $P_i$ in the inverse image of the divisor
 $\mfk{d}$. Similarly, the local intersection multiplicity computation in
 Remark \ref{locintmult} also shows that $D_i\cdot R=s^2$, for $i=1,2$.
\end{proof}
\begin{proof}[Proof of Theorem \ref{thm:neccsuff} ((3)$\implies$ (2))]
 Theorem \ref{global reducibility} proves that the condition (3) of 
 Theorem \ref{thm:neccsuff} implies condition (2).
\end{proof}

\section{Ulrich line bundles on double planes}\label{construct}
In this section, we study the existence of Ulrich line
bundles over smooth double planes. We describe
classes of double planes $X$ branched along
a smooth degree $2s$ curve $B$ and Ulrich line bundles over
them, for every $s$. For a fixed $B\in|\mcOP(2s)|_{sm}$, denote
\begin{equation}\label{tangentfiber}
 \mfk{T}_B=\{C\in |\mcOP(s)|_{sm}\,|\,B\cap C=2\mfk{d},\ \mfk{d}\text{ an effective divisor of degree } s^2\text{ on }B\} \,.
\end{equation}
Then $\mfk{T}_B$ consists of smooth degree $s$ curves $C$ which are 
even order tangents to $B$ at every point in $ C\cap B $. 
By Theorem \ref{thm:neccsuff}, the non-emptiness of $\mfk{T}_B$
implies the existence of Ulrich line bundles on $X$. Note that,
by Remark \ref{oneparameter}, if $\mfk{T}_B$ is non-empty, then
it contains at least a one parameter family of curves.
\begin{rmk}\label{sard}
 While defining the set $\mfk{T}_B$ in \eqref{tangentfiber}, we could assume more generally that 
 the divisor $\mfk{d}\subset B$ is reduced. 
 Indeed, consider an Ulrich
 line bundle $\mcO_X(D)$ on $X$ coming from a $C\in \mfk{T}_B$.
 This line bundle gives a morphism $\phi:X\ra\mbb{P}^1$, whose fibres are precisely 
 the one parameter family of divisors in $X$ which are elements of $|D|$. 
 Consider the restriction of $\phi$ to the ramified curve $R$ i.e. $\phi|_R:R\ra\mbb{P}^1$,
 which is finite of degree $s^2$. 
 By Sard's Lemma, the general fibre which is of the form $D\cap R$ of $\phi|_R$ is non-singular. Hence, 
 a general $D\cap R$ is a reduced divisor on $R$ consisting of $s^2$ points. 
 Since $C\cap B=\pi(D\cap R)$, for a general $C$, the set
 $C\cap B$ consists of precisely $s^2$ points, and the corresponding divisor 
 $\mfk{d}$ (whose support is $C\cap B$) is reduced.
 \end{rmk}

\subsection{\underline{The branch curve is a conic ($s=1$)}}\label{conic}
It is known that a double plane $X$ branched over a
smooth conic is isomorphic to $\mbb{P}^1\times\mbb{P}^1$.
So, $\text{Pic}\,X=\mbb{Z}\oplus\mbb{Z}$. Consider generators
$l$ and $m$ of type $(0,1)$ and $(1,0)$.
\newtheorem*{corrconic}{Corollary 1.2}
\begin{corrconic}
 Let $X$ be a smooth surface which is a double cover of $\mbb{P}^2$ branched along
 a smooth conic curve $B$. Then $X$ admits a pair of Ulrich line bundles.
\end{corrconic}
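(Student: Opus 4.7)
The plan is to apply Theorem~\ref{thm:neccsuff} to the case $s = 1$. I first produce a curve $C$ satisfying condition (3) of that theorem: since $s=1$, I look for a line $C \subset \mbb{P}^2$ which is tangent to $B$ of even order at every point of $C \cap B$. By B\'ezout, $C \cdot B = 2$, so either $C$ meets $B$ transversally at two distinct points (with intersection multiplicity $1$ at each, which is odd and violates the even-order condition), or $C$ meets $B$ at a single point with intersection multiplicity $2$. The latter occurs precisely when $C$ is a tangent line to the conic $B$. Since $B$ is smooth, at every point of $B$ there is such a tangent line, giving in fact a one-parameter family of candidates parameterised by the dual conic $B^\vee \subset (\mbb{P}^2)^\vee$. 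For any such $C$, the divisor $\mfk{d} := \tfrac{1}{2}(C \cap B)$ is a single reduced point of degree $s^2 = 1$, as expected.

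Next I invoke the implication $(3) \Rightarrow (2) \Rightarrow (1)$ of Theorem~\ref{thm:neccsuff} to conclude that there is a smooth curve $D \subset X$ with $D \neq \sigma(D)$ and $D \cdot \sigma(D) = 1$, and that $L := \mcO_X(D)$ is an Ulrich line bundle with respect to $\pi$. To upgrade this to a \emph{pair} of Ulrich line bundles, I use the remark immediately following the proof of Theorem~\ref{thm:neccsuff}: the line bundle $\mcO_X(\sigma(D))$ is also Ulrich, and it is distinct from $\mcO_X(D)$ because
\[
\mcO_X(D) \cdot \mcO_X(\sigma(D)) = D \cdot \sigma(D) = s^2 = 1 \neq 0,
\]
whereas $\mcO_X(D)^2 = 0$ (a fact established in the proof of Theorem~\ref{thm:neccsuff}). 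Hence $\{\mcO_X(D),\,\mcO_X(\sigma(D))\}$ is the desired pair.

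There is no substantive obstacle here: the corollary is a direct specialisation of Theorem~\ref{thm:neccsuff}, the only real content being the elementary existence of a tangent line to a smooth conic. If one wishes to render the two bundles explicit, one can invoke the classical isomorphism $X \simeq \mbb{P}^1 \times \mbb{P}^1$ and Remark~\ref{oneparameter}: each of the line bundles $\mcO_X(D)$, $\mcO_X(\sigma(D))$ induces a morphism $\phi \colon X \to \mbb{P}^1$ whose fibres are the strict transforms of the one-parameter family of tangent lines to $B$; matching self-intersections and $\sigma$-action with the two rulings then identifies the pair with $\mcO(1,0)$ and $\mcO(0,1)$, recovering the geometric description stated in \S\ref{conic}.
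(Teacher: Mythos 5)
Your proposal is correct and follows essentially the same route as the paper: both arguments observe that the tangent lines to the smooth conic $B$ are exactly the degree-one curves satisfying condition (3) of Theorem~\ref{thm:neccsuff}, invoke that theorem to produce an Ulrich line bundle $\mcO_X(D)$, and use the involution $\sigma$ (with $D\cdot\sigma(D)=1\neq 0=D^2$) to obtain a distinct second one. The paper additionally pins down the pair as the two rulings of $X\simeq\mbb{P}^1\times\mbb{P}^1$, which you also sketch in your closing paragraph, so there is no substantive difference.
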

\begin{proof}
Suppose $\pi:X\ra \mbb{P}^2$ is a smooth double cover branched along a conic
$B\subset \mbb{P}^2$. 
In this case
\begin{equation*}
 \mfk{T}_B=\{C\in |\mcOP(1)|_{sm}\,|\,B\cap C=2\mfk{d}\,, \mfk{d}\text{ an effective divisor of degree } 1 \text{ on }B\} \,.
\end{equation*}
That is, $\mfk{T}_B$
consists of lines $C\subset\mbb{P}^2$ which are simple tangents to
the conic $B$ at any point $P\in B$. This set $\mfk{T}_B$ is
clearly non-empty. Hence, by Theorem \ref{thm:neccsuff}, the double plane $X$ admits
Ulrich line bundles. 

For $P\in B$, let $C_P$ denote a line in $\mbb{P}^2$
tangent to $B$ at $P$. Then the inverse image of $C_P\in\mfk{T}_B$, 
is $\pi^{-1}(C_P)=D_P+D_P'$, where $D_P$ and $D_P'$ are isomorphic to $C_P$ and 
$D_P\cdot D_P'=1$ by Theorem \ref{global reducibility}.
This also gives $D_P^2=D_P'^2=0$. Hence
$D_P,D_P'$ correspond to divisors in the linear systems
$|l|$ and $|m|$ (i.e. corresponding to the rulings
$(0,1)$ and $(1,0)$). Hence $X$ admits precisely
two Ulrich bundles given by $\mcO_X(l)$ and $\mcO_X(m)$.
\end{proof}
\subsection{\underline{The branch curve is  a quartic ($s=2$)}}\label{branch quartic}
A smooth projective surface  $X$ 
branched along a smooth quartic curve $B$ is a Del Pezzo surface
obtained by blowing up $\mbb{P}^2$ at 7 points in general position.
Thus $\text{Pic}\,X\simeq \mbb{Z}^{\oplus\, 8}$. 

Hadan, in \cite[$\mathcal{x}\,$2.1, $\mathcal{x}\,$2.2]{IH}, analyses the tangent conics 
to a given smooth quartic $B$. He defines a tangent conic to $B$ 
to be a smooth conic which intersects $B$ at 4 points each being a point
of simple tangency. From the 
short exact sequence,
$$0\ra \mcOP(-2)\ra\mcOP(2)\ra\mcO_B(2)\ra 0\,,$$
we obtain that $H^0(\mbb{P}^2,\mcOP(2))\simeq H^0(B,\mcO_B(2))$.
Hence any conic is determined by its restriction to $B$. Using this basic fact,
Hadan studies conics corresponding to divisors of the form $2\mfk{d}$, where
$\mcO_B(2\mfk{d})=\mcO_B(2)$ and $\mfk{d}$ is an effective divisor on $B$ 
of degree $4$. It is shown that to any smooth plane quartic,
the variety of tangent conics is a union of 63 disjoint one parameter families,
cf. \cite[Proposition 2.1]{IH}.
\newtheorem*{corrquartic}{Corollary 1.3}
\begin{corrquartic}
 Let $X$ be a smooth surface which is a double cover of $\mbb{P}^2$ branched along
 a smooth quartic curve $B$. Then $X$ admits 63 pairs of Ulrich line bundles.
\end{corrquartic}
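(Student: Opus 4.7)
The plan is to combine Theorem \ref{thm:neccsuff} with Hadan's enumeration of tangent conics to a smooth plane quartic. By the equivalence $(1) \Leftrightarrow (3)$ in Theorem \ref{thm:neccsuff}, specialized to $s=2$, Ulrich line bundles on $X$ correspond to smooth conics $C \subset \mbb{P}^2$ that are even-order tangents to $B$ at every point of $C \cap B$. The set $\mfk{T}_B$ defined in \eqref{tangentfiber} is exactly the set of such conics, and by Remark \ref{sard}, a general element $C \in \mfk{T}_B$ meets $B$ in a reduced divisor $2\mfk{d}$ with $\mfk{d}$ of degree $4$; this means $C$ is a tangent conic in Hadan's sense (four distinct points of simple tangency). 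So the first step is to identify, up to this generic behavior, the one-parameter families of $\mfk{T}_B$ with the one-parameter families of Hadan's tangent conics, and thereby import the count of $63$ families from \cite[Proposition 2.1]{IH}.

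Next, I would set up the correspondence between families of tangent conics and pairs of Ulrich line bundles. Fix a one-parameter family $\{C_t\}$ of tangent conics. By Theorem \ref{global reducibility}, for each such $C_t$ the inverse image splits as $\pi^{-1}(C_t) = D_t + \sigma(D_t)$ with $D_t \simeq C_t$ and $D_t \cdot \sigma(D_t) = 4$, and Part 2 of the proof of Theorem \ref{thm:neccsuff} shows that $\mcO_X(D_t)$ is an Ulrich line bundle. As $t$ varies, the curves $D_t$ form a one-parameter family of smooth curves in $X$ with $D_t^2 = 0$, hence they lie in a single linear equivalence class; by Remark \ref{oneparameter}, they are the fibers of the morphism $\phi_L : X \to \mbb{P}^1$ associated with the corresponding Ulrich line bundle $L = \mcO_X(D_t)$. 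Thus a single Hadan family produces exactly one pair $\{\mcO_X(D), \mcO_X(\sigma(D))\}$ of Ulrich line bundles, where the pairing comes from the $\sigma$-symmetry recorded in the remark following Theorem \ref{thm:neccsuff}.

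Conversely, starting from an Ulrich line bundle $L$ on $X$, the associated morphism $\phi_L : X \to \mbb{P}^1$ (Remark \ref{oneparameter}) has generic fiber a smooth curve $D$ with $D \neq \sigma(D)$, and the images $C = \pi(D)$ sweep out a one-parameter family of smooth conics in $\mfk{T}_B$ which, by Remark \ref{sard}, generically lie in Hadan's moduli. This recovers a unique Hadan family from the pair $\{L, \sigma^* L\}$, establishing a bijection and yielding $63$ pairs.

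The main obstacle is distinctness: one must verify that two genuinely different Hadan families yield non-isomorphic Ulrich line bundles. This follows from the previous paragraph because the family $\{D_t\}$ and hence its linear equivalence class, and hence $L = \mcO_X(D_t)$, is reconstructed intrinsically from the family $\{C_t\}$ via $\phi_L$; equivalently, two distinct Hadan families correspond to two distinct $2$-torsion classes in $\text{Pic}\,B$ (the $63$ nontrivial elements of $(\text{Pic}\,B)[2]$ for the genus-three curve $B$), and these translate to distinct linear equivalence classes on $X$ through the correspondence $\mfk{d} \mapsto D \cap R$ used in the proof of Theorem \ref{thm:neccsuff}. Once this is checked, together with the pairing $L \leftrightarrow \sigma^* L$ (which, as noted in the remark after Theorem \ref{thm:neccsuff}, is never the identity since $L \cdot \sigma^* L = s^2 = 4 \neq 0$), we conclude exactly $63$ pairs of Ulrich line bundles.
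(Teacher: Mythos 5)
Your proposal is correct and follows essentially the same route as the paper: identify $\mfk{T}_B$ with Hadan's 63 one-parameter families of tangent conics and apply Theorem \ref{global reducibility} to split each $\pi^{-1}(C)$ into a pair $D+\sigma(D)$ yielding a pair of Ulrich line bundles. You additionally spell out the distinctness of the 63 resulting pairs (via the 2-torsion classes on $B$ and the reconstruction of the family from $\phi_L$), a point the paper's proof leaves implicit, but this is a refinement rather than a different argument.
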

\begin{proof}
In this case, $\mfk{T}_B$ is given by
\begin{equation}\label{tangents}
 \{C\in|\mcOP(2)|_{sm}\ |\ B\cap C=2\mfk{d}, \mfk{d}\text{ an effective divisor of degree } 4\text{ on }B\}\,.
\end{equation}
 By the discussion above, $\mfk{T}_B$ is non-empty and in fact 
 consists of 63 disjoint one parameter families. By Theorem 
 \ref{global reducibility}, for each $C\in \mfk{T}_B$, the 
 inverse image can be written as $\pi^{-1}(C)=D+\sigma(D)$.
 In fact, each one parameter family of tangent conics in $\mfk{T}_B$ gives rise to 
 two classes $[D]$ and $[\sigma(D)]$ of curves in $X$, and hence
 two Ulrich line bundles $\mcO_X(D)$ and $\mcO_X(\sigma(D))$ on $X$.
 Hence, $X$ admits 63 pairs of Ulrich line bundles. 
\end{proof}
\begin{rmk}
 When $s=1$ and $s=2$, we can prove that the inverse image
 $\pi^{-1}(C)$ for a $C\in\mfk{T}_B$ is reducible without
 appealing to Theorem \ref{global reducibility}. When 
 $s=1,2$, from the genus-degree formula, a smooth $C\simeq \mbb{P}^1$. 
For any $C$, consider the inverse image curve $\pi^{-1}(C)$ in $X$. The
normalization $\widetilde{\pi^{-1}(C)}$ of $\pi^{-1}(C)$ 
is an etale double cover of $C$. But there are no
irreducible etale double covers of $\mbb{P}^1$. The only etale
double cover of $\mbb{P}^1$ is of the form $\mbb{P}^1\sqcup\mbb{P}^1$.
Thereby, $\widetilde{\pi^{-1}(C)}$
and hence $\pi^{-1}(C)$ are both reducible. This proves that $\pi^{-1}(C)$ is of the 
form $D_1+D_2$ where $D_i$'s are smooth curves isomorphic to $C$.  
However, the same argument will not be valid when $s>2$, since 
we will be dealing with curves of higher genus and their etale covers.
\end{rmk}
\subsection{\underline{The branch curve is of degree $2s$ when $s>2$}}\label{s>3discussion}
When $s=3$, then we can check that $K_X\simeq\mcO_X$ and
$H^1(X,\mcO_X)=0$, whereby $X$ is a K3 surface. For $s>3$,
 it is known that $X$ is a surface of general type. 
 The proof of Theorem \ref{sexticthm} is an immediate consequence of Lemma
 \ref{L not ample}. 
 \newtheorem*{thmdegreelarge}{Theorem 1.4}
 \begin{thmdegreelarge}
 Let $X$ be a smooth surface which is a double cover of $\mbb{P}^2$ branched along
 a generic smooth curve $B\subset\mbb{P}^2$ of degree $2s$ for $s\geq 3$. 
 Then $X$ does not admit Ulrich line bundles.
\end{thmdegreelarge}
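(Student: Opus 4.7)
The plan is to combine the necessary condition from Lemma \ref{L not ample} with a Noether--Lefschetz type statement about the Picard group of a generic double plane. In one direction, Lemma \ref{L not ample} tells us that any smooth double plane carrying an Ulrich line bundle must satisfy $\rho(X)>1$; equivalently, $\text{Pic}\,X\not\simeq\mathbb{Z}$. So it suffices to exhibit a nonempty Zariski open subset $U\subset|\mcOP(2s)|_{sm}$ such that for each $B\in U$ the double cover $X_B$ has Picard rank exactly one.

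First I would recall the Noether--Lefschetz principle in its version for cyclic covers. For $s\geq 3$ the canonical bundle $K_{X_B}=\pi^*\mcOP(s-3)$ is a pullback from $\mbb{P}^2$, and the cover has $p_g(X_B)>0$, so the Hodge-theoretic setup of the classical Noether--Lefschetz argument applies. The precise statement we need is that for generic $B$ of degree $2s\geq 6$ the pullback $\pi^*:\text{Pic}\,\mbb{P}^2\ra \text{Pic}\,X_B$ is an isomorphism, so $\text{Pic}\,X_B\simeq\mbb{Z}$, generated by $\pi^*\mcOP(1)$. This is the content of the results of Bruzzo and Lopez that the paper cites (\cite{Bu}, \cite[Chapter 2]{RF}), and I would simply quote them.

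Having those two ingredients, the proof is a one-line contradiction: if the generic double plane $X_B$ of degree $2s\geq 6$ admitted an Ulrich line bundle, Lemma \ref{L not ample} would force $\rho(X_B)>1$, contradicting the Noether--Lefschetz statement $\text{Pic}\,X_B\simeq\mbb{Z}$. Therefore no Ulrich line bundle exists for generic $B$.

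The only real obstacle is making sure the Noether--Lefschetz statement is invoked in the right form; this is exactly what the cited references provide, so no new work is needed here. The rest of the argument is purely formal.
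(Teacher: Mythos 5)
Your proposal is correct and follows essentially the same route as the paper: the necessary condition $\rho(X)>1$ from Lemma \ref{L not ample} combined with the Noether--Lefschetz type statement that a generic double plane branched over a smooth degree $2s$ curve, $s\geq 3$, has $\mathrm{Pic}\,X\simeq\mathbb{Z}$, quoted from the references. One small correction: the cited Picard-group result is due to Buium \cite{Bu} (see also Friedman \cite[Chapter 2]{RF}), not Bruzzo--Lopez, but this misattribution does not affect the argument.
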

\begin{proof}
 When $X$ is a generic double plane branched over a smooth
 curve $B$ of degree $2s$, with $s>2$, then $\text{Pic}\,X\simeq\mbb{Z}$,
 cf. \cite[Chapter II]{RF}, \cite{Bu}. Thus by Lemma \ref{L not ample}, 
 a generic double plane branched over
 a degree $2s$ curve for $s>2$ does not admit Ulrich line bundles. 
\end{proof}
In particular, the above theorem implies that for a generic
$B\in|\mcOP(2s)|_{sm}$ with $s>2$, we have $\mfk{T}_B=\emptyset$. Thus
starting from a general smooth degree $2s$ curve ($s>2$), one cannot
find smooth degree $s$ curves everywhere tangent to it to an even order.
However, we now show that starting with a smooth degree $s$ curve $C$,
we can find smooth degree $2s$ curves $B$ which are even 
order tangential to $C$ at all points of $C\cap B$. Thus, the double planes branched over such $B$-s
admit Ulrich line bundles.

\newtheorem{prop}[thm]{Proposition}
\begin{prop}\label{tangentbranch}
 Let $C$ be a smooth curve of degree $s$ in $\mbb{P}^2$. Then 
 there 
 exist smooth degree $2s$ curves $B$ such that $B\cdot C=2\mfk{d}$
 where $\mfk{d}$ is an effective divisor of degree $s^2$ on $C$.
 That is, there exist smooth curves $B$ of degree $2s$ such that
 $B$ is tangential
 to $C$ to an even order at every point of $B\cap C$.
\end{prop}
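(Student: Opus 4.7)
The plan is to first produce a suitable effective divisor $\mfk{d}$ of degree $s^2$ on $C$ such that $2\mfk{d}$ is cut out by some degree $2s$ plane curve, and then find a smooth curve in the associated linear subsystem of $|\mcOP(2s)|$.

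First I would pick any smooth degree $s$ curve $C'\subset\mbb{P}^2$ meeting $C$ transversally (a general element of $|\mcOP(s)|$ works by Bertini), and set $\mfk{d}:=C\cap C'$. This gives a reduced divisor of degree $s^2$ on $C$ with $\mcO_C(\mfk{d})=\mcO_C(s)$, hence $\mcO_C(2\mfk{d})=\mcO_C(2s)$. The short exact sequence
$$0\ra\mcOP(s)\ra\mcOP(2s)\ra\mcO_C(2s)\ra 0,$$
together with $H^1(\mbb{P}^2,\mcOP(s))=0$, shows that the restriction map is surjective, so some $B_0\in |\mcOP(2s)|$ restricts to $2\mfk{d}$ on $C$. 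Writing $F$ for the defining equation of $C$, every element of the affine subspace $V_{\mfk{d}}:=B_0+F\cdot H^0(\mbb{P}^2,\mcOP(s))$ of $H^0(\mbb{P}^2,\mcOP(2s))$ corresponds to a degree $2s$ curve whose restriction to $C$ is exactly $2\mfk{d}$.

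The remaining task is to find a smooth member of the linear subsystem $\mbb{P}(V_{\mfk{d}})\subset |\mcOP(2s)|$. Its set-theoretic base locus is contained in $\text{Supp}(\mfk{d})=\{P_1,\ldots,P_{s^2}\}$: points outside $C$ cannot lie in it since $F$ is nonzero there and $G$ can be adjusted freely, while points on $C\setminus\text{Supp}(\mfk{d})$ are already not on $B_0$. By Bertini, a general $B\in \mbb{P}(V_{\mfk{d}})$ is therefore smooth on $\mbb{P}^2\setminus\{P_1,\ldots,P_{s^2}\}$.

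The main obstacle is smoothness at the tangent points $P_i$ themselves. In local analytic coordinates $(x,y)$ at $P_i$ with $C=\{y=0\}$, write $F=y\cdot u$ for a local unit $u$, and $B_0=a_0x^2+y\cdot h_0(x,y)+O(x^3)$ (the $x^2$ term is forced by $(B_0\cdot C)_{P_i}=2$). Then $B=B_0+FG=a_0x^2+y(h_0+uG)+O(x^3)$, so $\partial_xB|_{P_i}=0$ automatically (reflecting the forced tangency with $C$), and $\partial_yB|_{P_i}=h_0(P_i)+u(P_i)\,G(P_i)$. Thus $B$ is smooth at $P_i$ iff $G(P_i)\neq c_i:=-h_0(P_i)/u(P_i)$. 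Each such condition carves out a proper affine hyperplane in $H^0(\mbb{P}^2,\mcOP(s))$, since evaluation at $P_i$ is a nonzero linear functional; the finite union of these $s^2$ hyperplanes cannot cover $H^0(\mbb{P}^2,\mcOP(s))$. Choosing $G$ outside this union and combining with Bertini produces a smooth $B\in V_{\mfk{d}}$ with $B\cdot C=2\mfk{d}$, which is exactly what is required.
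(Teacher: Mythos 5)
Your proof is correct, and its skeleton is the same as the paper's: use the surjection $H^0(\mbb{P}^2,\mcOP(2s))\ra H^0(C,\mcO_C(2s))$ (coming from $H^1(\mbb{P}^2,\mcOP(s))=0$) to lift $2\mfk{d}$ to a pencil-plus of curves $B_0+F\cdot H^0(\mbb{P}^2,\mcOP(s))$, apply Bertini away from the base locus $\text{Supp}\,\mfk{d}$, and kill the singularities at the forced base points by observing that smoothness at $P_i$ is a single non-degenerate affine-linear condition on the value $H(P_i)$; your local computation of $\partial_y B|_{P_i}$ is exactly the paper's condition $H(P_1)\neq -G_x(P_1)/F_x(P_1)$ in different coordinates. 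The one genuine divergence is the choice of $\mfk{d}$: you take $\mfk{d}=C\cap C'\in|\mcO_C(s)|$ for a transversal degree-$s$ curve $C'$, whereas the paper takes $\mfk{d}\in|\mcO_C(s)\otimes L_0|$ for a line bundle $L_0$ of order two, using Riemann--Roch to see that this system is nonempty. For the proposition as literally stated ($\mfk{d}$ is existentially quantified) either choice is legitimate, and yours is slightly more elementary. The difference does matter downstream, though: with your choice the defining equation of $B$ restricts on $C$ to (a unit times) the square of a degree-$s$ form, so the reducibility of $\pi^{-1}(C)$ into $D_1+\sigma(D_1)$ is immediate by factoring $t^2-F$, with no topology needed; with the paper's choice ($L_0$ nontrivial, so $\mcO_C(\mfk{d})\not\simeq\mcO_C(s)$ and $F|_C$ is not a square) one is in the subtler case that Section 4 is meant to handle, and it is also the case required by the statement of Theorem \ref{s>2}. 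So your argument proves the proposition, but if the goal is specifically part (a) of Theorem \ref{s>2}, you would need to replace your $\mfk{d}$ by one in $|\mcO_C(s)\otimes L_0|$; the rest of your proof goes through verbatim for that choice.
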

\begin{proof}
 Suppose that $C$ is the zero set of the degree $s$ homogeneous polynomial
 $F(x,y,z)$ where $x,y,z$ are the homogeneous coordinates in $\mbb{P}^2$.
 Let us denote the inclusion $C\hra \mbb{P}^2$ by $i$.
 Now consider the following short exact sequence:
 $$0\ra \mcOP(-s)\xra{\otimes\,F}\mcOP\ra i_*\mcO_C\ra 0\,.$$
 Tensoring by $\mcOP(2s)$, we get
 $$0\ra \mcOP(s)\xra{\otimes\,F}\mcOP(2s)\ra i_*\mcO_C(2s)\ra 0\,.$$
 The long exact sequence of cohomology associated to the 
 above short exact sequence gives
 $$0\ra H^0(\mbb{P}^2,\mcOP(s))\xra{\otimes F} H^0(\mbb{P}^2,\mcOP(2s))
 \ra H^0(C,\mcO_C(2s))\ra 0\,.$$
 Therefore, 
 $$H^0(C,\mcO_C(2s))\simeq\frac{H^0(\mbb{P}^2,\mcOP(2s))}{F.H^0(\mbb{P}^2,\mcOP(s))}\,.$$
If $L_0\in\text{Pic}\,C$ is an element of order two, then 
the line bundle $\mcO_C(s)\otimes L_0$ on $C$ is of degree $s^2$, 
and the Riemann-Roch
formula gives that $$h^0(C,\mcO_C(s)\otimes L_0)=\frac{s^2+3s}{2}\,.$$
 
For $\mfk{d}\in |\mcO_C(s)\otimes L_0|$, 
consider the corresponding divisor 
 $2\mfk{d}\in |\mcO_C(2s)|$. This corresponds to an element 
 $\overline{G}\in \frac{H^0(\mbb{P}^2,\mcOP(2s))}{F.H^0(\mbb{P}^2,\mcOP(s))}$ 
 upto a non-zero scalar.
  Thus any element in the vector subspace 
 $$V=\{\lambda G+FH: \lambda\in\mbb{C},\ H\in H^0(\mbb{P}^2,\mcOP(s))\}$$ of
 $ H^0(\mbb{P}^2,\mcOP(2s))$ with $\lambda\neq 0$ maps to an element in $H^0(C,\mcO_C(2s))$ 
 corresponding to $2\mfk{d}$. 
 
 This means that any such degree $2s$ curve $B$ in the linear system
 $\mbb{P}V$ meets $C$ tangentially along the divisor $\mfk{d}$ 
 and at each point in $\text{Supp}\,\mfk{d}$, $B$ is an even
 order tangent to $C$. In order to 
 prove our claim, we need to show the existence of \emph{smooth} curves $B$.
 
 By Bertini's
 theorem, a general element of $\mbb{P}V$ is smooth away from the base points
 of this linear system. The base points of $\mbb{P}V$ are precisely
 the points in the support of $\mfk{d}$. Let $P_1,P_2,\cdots, P_r$ be
 these points. As $P_1$ is a smooth point of $C$, the partial derivatives of $F$
 cannot all vanish at $P_1$. Without loss of generality, assume that 
 $F_x(P_1)\neq 0$. Consider the set of all $H\in H^0(\mbb{P}^2,\mcOP(s))$ 
 with $(G+FH)_x\neq 0$, i.e.
 $$U_1=\{H\in H^0(\mbb{P}^2,\mcOP(s))\,:\, H(P_1)\neq 0, H(P_1)\neq \frac{-G_x(P_1)}{F_x(P_1)}\}\,.$$
 Note that $U_1$ is an open dense subset of $H^0(\mbb{P}^2,\mcOP(s))$.
 Similarly consider open subsets $U_i$  corresponding to the points $P_i$ for $i=2,3,\cdots, r$. For a general
 $H\in \cap_{i=1}^{r} U_i$, the curve corresponding to $G+FH$ is a 
 smooth curve of degree $2s$ which meets $C$ (even order) tangentially 
 at every point of $B\cap C$.
\end{proof}
As remarked earlier, the above proposition gives rise to special
double planes
branched over degree $2s$ curves ($s>2$) that carry Ulrich line bundles.
Using this we prove Theorem \ref{s>2}.
\newtheorem*{thmspecial}{Theorem 1.5}
\begin{thmspecial}
Let $C\subset \mbb{P}^2$ be a smooth degree $s$ curve. Consider an 
effective divisor $\mfk{d}$ of degree $s^2$ on
$C$ corresponding to a section of $H^0(C,\mcO_C(s)\otimes L_0)$, where 
$L_0\in\emph{Pic}\,C$ is of order two.
\begin{enumerate}
 \item[(a)] Then, there exist smooth curves $B$ of degree $2s$ such that $B\cdot C=2\mfk{d}$.
 \item[(b)] Further, let $X$ be a double cover of $\mbb{P}^2$ branched along a smooth curve
$B$ as in part (a) of this theorem. Then, $X$ admits Ulrich line bundles.
\end{enumerate}
\end{thmspecial}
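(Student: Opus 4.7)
The plan is to observe that both parts follow almost immediately from results already proven in the paper. Part (a) is essentially a restatement of Proposition \ref{tangentbranch} under the specific hypothesis that $\mfk{d}$ comes from a $2$-torsion twist of $\mcO_C(s)$, while part (b) is then just an application of the equivalence (3)$\Leftrightarrow$(1) in Theorem \ref{thm:neccsuff}.

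For part (a), the hypothesis that $L_0\in\text{Pic}\,C$ has order two ensures that $(\mcO_C(s)\otimes L_0)^{\otimes 2}=\mcO_C(2s)\otimes L_0^{\otimes 2}=\mcO_C(2s)$. Hence for any effective $\mfk{d}\in|\mcO_C(s)\otimes L_0|$ of degree $s^2$, the doubled divisor $2\mfk{d}$ lies in $|\mcO_C(2s)|$. The construction of Proposition \ref{tangentbranch}---lifting $2\mfk{d}$ through the restriction sequence
\[
0\to H^0(\mbb{P}^2,\mcOP(s))\xra{\otimes F} H^0(\mbb{P}^2,\mcOP(2s))\to H^0(C,\mcO_C(2s))\to 0
\]
to obtain a preimage $G$, working inside the linear subsystem $\mbb{P}V$ with $V=\mbb{C}G+F\cdot H^0(\mbb{P}^2,\mcOP(s))$, and finally invoking Bertini together with an open condition at each base point $P_i\in\text{Supp}\,\mfk{d}$---then produces smooth curves $B$ of degree $2s$ satisfying $B\cdot C=2\mfk{d}$.

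For part (b), the equality $B\cdot C=2\mfk{d}$ on $C$ says that at every $P\in B\cap C$ the local intersection number $(B\cdot C)_P=2\,\text{mult}_P(\mfk{d})$ is a positive even integer, so the smooth degree $s$ curve $C$ is an even-order tangent to $B$ at every point of $B\cap C$. This is precisely condition (3) of Theorem \ref{thm:neccsuff}, so by (3)$\Rightarrow$(1) the double plane $X$ branched along $B$ admits Ulrich line bundles with respect to $\pi$.

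I do not anticipate a substantive obstacle: the genuine work---the existence of smooth tangential curves (Proposition \ref{tangentbranch}) and the topological/deformation-theoretic reducibility argument of Theorem \ref{global reducibility} that drives (3)$\Rightarrow$(1) in Theorem \ref{thm:neccsuff}---has already been carried out. The only small point worth flagging is that the statement does not require $\mfk{d}$ to be reduced; this is harmless because restriction to $C$ annihilates every element of $F\cdot H^0(\mbb{P}^2,\mcOP(s))$, so the identity $B\cdot C=2\mfk{d}$ on $C$ holds for every $B\in\mbb{P}V$ irrespective of the reducedness of $\mfk{d}$.
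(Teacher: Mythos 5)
Your proposal is correct and follows essentially the same route as the paper: part (a) is delegated to Proposition \ref{tangentbranch}, and part (b) observes that $B\cdot C=2\mfk{d}$ makes $C$ an even-order tangent to $B$ at every intersection point, so condition (3) of Theorem \ref{thm:neccsuff} holds and Ulrich line bundles exist (the paper phrases this by passing explicitly through Theorem \ref{global reducibility} to get $\pi^{-1}(C)=D_1+\sigma(D_1)$ with $D_1\cdot\sigma(D_1)=s^2$, which is exactly the content of (3)$\Rightarrow$(2)$\Rightarrow$(1)). Your closing remark about reducedness of $\mfk{d}$ being immaterial is a fair observation and consistent with the paper's Remark \ref{sard}.
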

\begin{proof}
Proposition \ref{tangentbranch} proves Part (a) of the Theorem. Suppose 
$C$ is a smooth degree $s$ curve and $B$ is a smooth degree $2s$ curve that is 
even order tangent to $C$ everywhere. 
Let $X$ be a double plane branched along $B$. Then,
 $C\in\mfk{T}_B$.
 By Theorem \ref{global reducibility}, $\pi^{-1}(C)=D_1+D_2$, 
 where $D_2=\sigma(D_1)$ and $D_1\cdot D_2=s^2=D_i\cdot R$. 
 Thus by Theorem \ref{thm:neccsuff}, $\mcO_X(D_1)$ and $\mcO_X(D_2)$ are
 Ulrich line bundles on $X$.
\end{proof}
We now give a proof of Theorem \ref{fermat}, which explicitly 
gives examples of double planes $\pi:X\ra\mbb{P}^2$
branched over a degree $2s$ curve, for each \emph{even} $s> 1$, which admit
Ulrich line bundles.
\newtheorem*{thmfermat}{Theorem 1.6}
\begin{thmfermat}
 Let $B\subset \mbb{P}^2$ be a Fermat curve of degree $2s$ where
 $s$ is an even integer. Then the double plane $X$ branched along 
 $B$ admits Ulrich line bundles.
\end{thmfermat}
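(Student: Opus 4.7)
By Theorem~\ref{thm:neccsuff} it suffices to exhibit a smooth degree-$s$ curve $C\subset\mbb{P}^2$ which is an even-order tangent to $B$ at every point of $C\cap B$. Writing $F=x^{2s}+y^{2s}+z^{2s}$ for a defining equation of $B$, this amounts to finding a curve $C:G=0$ of degree $s$ such that $F|_C$ is a square in $H^0(C,\mcO_C(2s))$. I would take
\[
C\,:\,G=x^s+y^s+c\,z^s=0,\qquad\text{where }c\in\mbb{C}\text{ satisfies }c^2=-2,
\]
and since all three coefficients are nonzero the Jacobian $(sx^{s-1},sy^{s-1},sc\,z^{s-1})$ has no common zero in $\mbb{P}^2$, so $C$ is smooth of degree $s$.

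The heart of the argument is a one-line identity. On $C$ we have $cz^s=-(x^s+y^s)$, and squaring gives $-2z^{2s}=(x^s+y^s)^2$, whence
\[
F|_C=x^{2s}+y^{2s}-\tfrac{1}{2}(x^s+y^s)^2=\tfrac{1}{2}(x^s-y^s)^2.
\]
Setting $H=(x^s-y^s)/\sqrt{2}\in H^0(\mbb{P}^2,\mcOP(s))$, this reads $F|_C=(H|_C)^2$ in $H^0(C,\mcO_C(2s))$. Consequently the intersection divisor cut out on $C$ by $B$ is $B\cdot C=2\mfk{d}$, with $\mfk{d}=\mathrm{div}_C(H|_C)$ an effective divisor of degree $s^2$, so at every $P\in C\cap B$ the multiplicity $(C\cdot B)_P=2\,\mathrm{ord}_P(H|_C)$ is even. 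Hence $C\in\mfk{T}_B$, and the implication (3)$\Rightarrow$(1) of Theorem~\ref{thm:neccsuff} produces an Ulrich line bundle on the double plane $X$.

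\textbf{Main obstacle.} The whole argument reduces to guessing the correct shape of $C$; once one writes $C:ax^s+by^s+cz^s=0$ and demands that $F|_C$ be a square in $x^s$ and $y^s$, a one-line discriminant computation on the binary quadratic $(a^2+c^2)X^2+2abXY+(b^2+c^2)Y^2$ forces $c^2(a^2+b^2+c^2)=0$, and (ruling out the reducible case $c=0$) the choice $a=b=1,\,c^2=-2$ is essentially unique up to rescaling. I therefore do not expect a serious obstacle; notably the parity of $s$ does not enter visibly in the calculation, so the hypothesis "$s$ even" in the statement seems to be only for a cleaner statement and could presumably be relaxed with this same construction.
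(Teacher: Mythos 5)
Your proof is correct, and it takes a genuinely different route from the paper's. The paper keeps the ``obvious'' companion curve $C:x^s-y^s-z^s=0$ for $B:x^{2s}-y^{2s}-z^{2s}=0$ and computes the intersection pointwise: one checks that $F|_C=2\,y^sz^s|_C$, so $C$ meets $B$ in exactly $2s$ points lying on the coordinate lines $y=0$ and $z=0$, each with multiplicity $s$ --- which is even precisely when $s$ is even, and this is exactly where the parity hypothesis enters. You instead perturb the coefficient of $z^s$ so that $F|_C$ becomes a global square $\tfrac12(x^s-y^s)^2$ of (the restriction of) a degree-$s$ form; this makes every local intersection multiplicity even at once, with no need to locate the points of $C\cap B$ or compute individual multiplicities. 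Both arguments then feed condition (3) of Theorem~\ref{thm:neccsuff}. Your closing observation is also correct and worth emphasizing: since the identity $F|_C=\tfrac12(x^s-y^s)^2$ holds for every $s\geq 1$, your construction proves the stronger statement that the double plane branched along a Fermat curve of degree $2s$ carries Ulrich line bundles for \emph{all} $s$, odd or even; the evenness hypothesis in the statement is an artifact of the paper's particular choice of $C$. (For completeness one should record, as you implicitly do, that $H=(x^s-y^s)/\sqrt{2}$ does not vanish identically on the irreducible smooth curve $C$ --- it is not proportional to $x^s+y^s+cz^s$ --- so $\mfk{d}=\mathrm{div}_C(H|_C)$ is a genuine effective divisor of degree $s^2$ and $B\cdot C=2\mfk{d}$ as claimed.)
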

\begin{proof}
 Let $B\subset \mbb{P}^2$ be a Fermat curve of degree $2s$. Then there exists a choice
 of coordinates $x,y,z$ on $\mbb{P}^2$ in which $B$ has the form
 $$ (x^{2s}-y^{2s}-z^{2s}=0)\,.$$ Consider the double plane 
$\pi:X\ra\mbb{P}^2$ branched over this smooth curve $B$. 

Let $C$ be the degree $s$ Fermat
 curve $(x^s-y^s-z^s=0)$. 
 Denote the $s$-th roots of unity by
 $1,\xi,\xi^2,\cdots,\xi^{s-1}$. Then, for $a\neq 0$, at the $2s$-points
 $(a:a:0), (a:\xi a:0),\cdots, (a:\xi^{s-1}a:0)$ and
 $(a:0:a), (a:0:\xi a),\cdots, (a:0:\xi^{s-1}a)$ respectively,
 the lines $(x-y=0), (\xi x- y=0), \cdots, (\xi^{s-1} x-y=0)$
 and the lines $(x-z=0), (\xi x- z=0), \cdots, (\xi^{s-1} x-z=0)$
 are :
 \begin{itemize}
  \item tangents of order $2s$ to the curve $B$, and
  \item tangents of order $s$ to the curve $C$.
 \end{itemize}
Hence, at each of above $2s$ points, $B$ and $C$ are tangential
of order $s$ to each other. This accounts for all the intersections
between $B$ and $C$ since $B\cdot C=2s^2$. 
Thereby the curve $C$ 
meets $B$ at $2s$ points, each of which is a point of tangency
of even order $s$ (since $s$ is even by hypothesis). 
Thus by Theorem \ref{global reducibility},                                                                                                                                                                                                             
$\pi^{-1}(C)=D_1+D_2$ and each $\mcO_X(D_i)$ is an Ulrich bundle
on $X$.
\end{proof}
\section{Ulrich line bundles on higher dimensional cyclic covers of Projective space}\label{ngeq3}
In this section, we make a few remarks about the existence of Ulrich 
line bundles on higher dimensional cyclic covers.
\newtheorem*{thmhighdim}{Theorem 1.7}
\begin{thmhighdim}\label{higherdim}
 Consider a smooth $d$-fold cyclic cover $\pi:X\ra\mbb{P}^n$ 
 branched over a smooth hypersurface $B\subset\mbb{P}^n$ where $n\geq 3$. If $d\leq n$, then
 $X$ does not admit Ulrich line bundles with respect to $\pi$.
\end{thmhighdim}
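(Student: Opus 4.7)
The plan is to derive a contradiction from the existence of an Ulrich line bundle $L$ by playing off two cohomological constraints: a top-degree vanishing $c_1(L)^d=0$ coming from the morphism to projective space induced by $L$, and the rank-one structure of $\mathrm{Pic}(X)\otimes\mbb{Q}$ forced by $n\geq 3$.

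First I would unpack the Ulrich condition. If $L$ is Ulrich with respect to $\pi$, then $\pi_*L\cong\mcO_{\mbb{P}^n}^{\oplus d}$ and in particular $h^0(X,L)=d$. Because $\pi$ is affine, the natural surjection $\pi^*\pi_*L\twoheadrightarrow L$ shows that $L$ is globally generated, so the $d$ sections of $L$ define a morphism $\phi:X\ra\mbb{P}^{d-1}$ with $\phi^*\mcO_{\mbb{P}^{d-1}}(1)\cong L$. Since $c_1(\mcO_{\mbb{P}^{d-1}}(1))^d=0$ in $H^{2d}(\mbb{P}^{d-1},\mbb{Q})$, pulling back by $\phi$ yields
$$c_1(L)^d=0\quad\text{in }H^{2d}(X,\mbb{Q}).$$

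Next I would establish that $\mathrm{Pic}(X)\otimes\mbb{Q}=\mbb{Q}\cdot H$, where $H:=\pi^*\mcO_{\mbb{P}^n}(1)$. Using $\pi_*\mcO_X=\bigoplus_{j=0}^{d-1}\mcO_{\mbb{P}^n}(-js)$ and the finiteness of $\pi$, one has $H^i(X,\mcO_X)=\bigoplus_jH^i(\mbb{P}^n,\mcO_{\mbb{P}^n}(-js))=0$ for $0<i<n$. Since $n\geq 3$, both $H^1(X,\mcO_X)$ and $H^2(X,\mcO_X)$ vanish, so $\mathrm{Pic}^0(X)=0$ and, by the Lefschetz $(1,1)$ theorem, $\mathrm{Pic}(X)\otimes\mbb{C}=H^{1,1}(X)=H^2(X,\mbb{C})$. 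To compute $b_2(X)$, I would realize $X$ as the quasi-smooth hypersurface $\{t^d=F\}$ of degree $ds$ in the weighted projective space $\mbb{P}(1^{n+1},s)$, and apply the Lefschetz hyperplane theorem for quasi-smooth hypersurfaces in weighted projective spaces (Dolgachev, Steenbrink) to obtain $H^k(X,\mbb{Q})\cong H^k(\mbb{P}(1^{n+1},s),\mbb{Q})$ for $k<n$. In degree two this ambient cohomology is one-dimensional, so $b_2(X)=1$ and $H$ spans $\mathrm{Pic}(X)\otimes\mbb{Q}$.

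Finally, write $c_1(L)=tH$ in $N^1(X)_\mbb{Q}$. Since $h^0(X,L)=d\geq 2>h^0(X,\mcO_X)$, $L$ is a nontrivial effective class; intersecting with the ample class $H^{n-1}$ gives $0<L\cdot H^{n-1}=tH^n=td$, and as $H^n=\deg\pi=d>0$ we conclude $t>0$. The hypothesis $d\leq n$ now permits intersecting the vanishing $c_1(L)^d=0$ with the nonzero class $H^{n-d}$, yielding
$$0=c_1(L)^d\cdot H^{n-d}=t^dH^n=t^dd,$$
forcing $t=0$ and contradicting $t>0$.

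The main obstacle is the Picard-rank computation: the classical Lefschetz hyperplane theorem does not apply verbatim to the singular ambient $\mbb{P}(1^{n+1},s)$, so one must invoke the quasi-smooth version or, equivalently, resolve the singularity (which lies outside $X$) and apply Grothendieck--Lefschetz on a smooth ambient. The assumption $d\leq n$ enters only in the last step, where $H^{n-d}$ must be a genuine cohomology class of non-negative degree; this explains why the bound is sharp and, in line with the authors' emphasis on double planes, why the case $d>n$ would require a different treatment.
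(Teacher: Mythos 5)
Your proof is correct, but it follows a genuinely different and more uniform route than the paper's. Both arguments begin identically: $\pi_*L\cong\mcO_{\mbb{P}^n}^{\oplus d}$ gives $h^0(X,L)=d$, global generation follows from the surjection $\pi^*\pi_*L\to L$, and the induced map to $\mbb{P}^{d-1}$ yields the top-degree vanishing $c_1(L)^d=0$ (the paper phrases this as $d$ non-intersecting divisors in $|L|$). After that the paper splits into cases: for a double cover of $\mbb{P}^3$ it argues directly that the images $\pi(D_1),\pi(D_2)$ must meet $B$, forcing $D_1\cap D_2\neq\emptyset$ on the ramification divisor; for the cyclic cubic threefold it uses Noether--Lefschetz on the cubic in $\mbb{P}^4$; and for all remaining cases it cites Lyu--Pan for $\mathrm{Pic}\,X\cong\mbb{Z}$. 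You instead prove once and for all that $b_2(X)=1$, by realizing $X$ as a quasi-smooth hypersurface $\{t^d=F\}$ in $\mbb{P}(1^{n+1},s)$ (note $X$ avoids the cone point $[0:\cdots:0:1]$, so quasi-smoothness reduces to smoothness of $B$) and applying the Dolgachev--Steenbrink Lefschetz theorem together with $H^1(X,\mcO_X)=H^2(X,\mcO_X)=0$; the contradiction $0=c_1(L)^d\cdot H^{n-d}=t^d d$ with $t>0$ then closes the argument and correctly isolates where $d\leq n$ is used. What your approach buys is uniformity and near self-containedness (one external input, the quasi-smooth Lefschetz theorem, replaces both the Noether--Lefschetz citation and the Lyu--Pan citation); what the paper's first case buys is elementarity, since the argument that two divisors on a double solid must meet over the branch locus uses no Hodge theory at all. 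One small redundancy: you do not actually need the Lefschetz $(1,1)$ theorem or $\mathrm{Pic}^0(X)=0$ for the final step, since the computation takes place entirely in $H^2(X,\mbb{Q})=\mbb{Q}\cdot H$ once $b_2=1$ is known.
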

\begin{proof}
Let $\pi:X\ra\mbb{P}^n$  be a smooth $d$-fold cyclic cover 
branched over a smooth hypersurface where $n\geq 3$ and $d\leq n$.
If $L$ is an Ulrich line bundle on $X$
, then $h^0(X,L)=d$ and $L$ is 
globally generated. That is, there is a surjective morphism
$$\mcO_X^{\oplus\,d}\ra L\ra 0\,,$$
which shows that there are $d$ divisors of $X$ in $|L|$
which do not intersect, i.e. $L^d=0$.

\begin{itemize}
 \item \underline{Double cover of $\mbb{P}^3$} - 
 If $X$ is a double cover of $\mbb{P}^3$
admitting an Ulrich line bundle, then $L^2=0$, 
that is, there are two hypersurfaces $D_1$ and $D_2$ in 
$X$ with $D_1\cap D_2=\emptyset$. But this is not possible.
Indeed, let $\pi(D_i)$ denote the hypersurfaces in $\mbb{P}^3$ which are the 
images of $D_i$ for $i=1,2$. Note that
$$\pi(D_1)\cap\pi(D_2)\cap B\neq\emptyset\,,$$
since the images of $n$ hypersurfaces in $\mbb{P}^n$ is not empty.
This shows that $D_1\cap D_2\neq\emptyset$ and in fact they have
non-empty intersection along the ramification divisor. Thus, $X$ cannot
admit Ulrich line bundles.
\item \underline{3-fold cover of $\mbb{P}^3$ branched along a cubic} - 
When $X$ is a threefold cover of $\mbb{P}^3$ 
branched along a cubic, then $X$ is a cubic hypersurface in $\mbb{P}^4$.
By the Noether-Lefschetz theorem
\cite[Example 3.1.25]{Laz}, $\text{Pic}\,X=\mbb{Z}$, and 
hence any non-zero effective divisor is ample.
Thus $X$ cannot admit a globally generated line bundle $L$ with
$L^3=0$. So, $X$ does not admit Ulrich line bundles.
\item \underline{Other cases}- 
The remaining cases are:
\begin{itemize}
 \item[-] $X$ is a threefold covering of $\mbb{P}^3$ branched
 along a smooth hypersurface of degree $3s$ with $s\geq 2$;
 \item[-] $X\ra\mbb{P}^n$ be a $d$-fold
cover of $\mbb{P}^n$ with $n\geq 4$ and $d\leq n$. 
\end{itemize}
By discussion in the previous case, 
if $L$ is a line bundle on $X$ with $L^d=0$ and $h^0(X,L)>0$,
then $\text{Pic}\,X\neq \mbb{Z}$. 
By applications of Noether-Lefschetz theorem 
 to the pairs $(X,R)$ and $(\mbb{P}^n,B)$ and the isomorphism of $R$ and $B$,
 \cite{LP} prove that $\text{Pic}\,X=\mbb{Z}$
 in both these cases. Thus, $X$ cannot admit Ulrich line bundles.
 \end{itemize}
\end{proof}

\subsection*{Acknowledgements} We would like to thank N. Mohan Kumar
for useful comments on the initial version of the draft. This work was
supported by the Department of Atomic Energy, Government of India
[project no. 12 - R\&D - TFR - 5.01 - 0500].

\bibliographystyle{plain}
\bibliography{refs}

\end{document}